\documentclass{article}

\usepackage{amsmath}
\usepackage{amsthm}
\usepackage{amsfonts}
\usepackage{amsbsy}
\usepackage{amssymb}
\usepackage[initials]{amsrefs}

\newtheorem{theorem}{Theorem}

\newtheorem{lemma}{Lemma}

\newcommand{\R}{{\mathbb R}}
\newcommand{\Z}{{\mathbb Z}}
\newcommand{\C}{{\mathbb C}}
\newcommand{\set}[2]{ \left\{ #1 \ \left| \ #2 \right. \right\} }

\usepackage[pdf]{pstricks}

\newcommand{\ang}[1]{\left< #1 \right>}
\newcommand{\M}{{\mathcal M}}
\newcommand{\at}[1]{\left. #1 \right|}
\newcommand{\F}{{\mathcal F}}
\newtheorem*{example}{Example}
\newcommand{\SL}{{\mathrm{SL}}}

\newcommand{\aff}{\mu_{\!_{\mathcal A}}}

\title{On the Oberlin affine curvature condition}
\date{\today}
\author{Philip T. Gressman\footnote{Partially supported by NSF grant DMS-1361697.}}

\begin{document}
\maketitle

\begin{abstract}
In this paper we generalize the well-known notions of affine arclength and affine hypersurface measure to submanifolds of any dimension $d$ in $\R^n$, $1 \leq d \leq  n - 1$. We show that a canonical affine invariant measure exists and that, modulo sufficient regularity assumptions on the submanifold, the measure satisfies the affine curvature condition of D.~Oberlin with an exponent which is best possible. The proof combines aspects of Geometric Invariant Theory, convex geometry, and frame theory. A significant new element of the proof is a generalization to higher dimensions of an earlier result \cite{gressman2009} concerning inequalities of reverse Sobolev type for polynomials on arbitrary measurable subsets of the real line.
\end{abstract}

\section{Introduction}
Many of the deep questions in harmonic analysis, including Fourier restriction, decoupling theory, or $L^p$-improving estimates for geometric averages, deal with certain operators associated to submanifolds of Euclidean space. In most cases, the ``nicest possible'' submanifolds are, informally, as far as possible from lying in any affine hyperplane. Many of these problems also exhibit natural affine invariance, meaning that when the underlying Euclidean space is transformed by a measure-preserving affine linear mapping, the the relevant quantities (norms, etc.) are unchanged. This simple observation leads naturally to the question of how in general to properly quantify this sort of well-curvedness in a way that respects affine invariance. Of the many approaches to this question, one particularly successful strategy has been the use of the so-called affine arclength measure for curves and the analogous notion of affine hypersurface measures (sometimes called the equiaffine measure). In the former case, affine arclength is defined on a curve parametrized by $\gamma : I \rightarrow \R^n$ by 
\[ \int f d \aff := \int_{I} f(\gamma(t)) \left| \det (\gamma'(t),\ldots,\gamma^{(n)}(t))\right|^{\frac{2}{n(n+1)}} dt, \]
while equiaffine measure on the graph $(x,\varphi(x))$ over $U \subset \R^{n-1}$ is given by
\[ \int f d \aff := \int_U f(x,\varphi(x)) |\det \nabla^2 \varphi(x)|^{\frac{1}{n+1}} dx, \]
where $\nabla^2 \varphi$ is the Hessian matrix of second derivatives of $\varphi$.
Though these measures were well-known outside harmonic analysis for quite some time (see, for example, \cites{guggenheimer1963,lutwak1991}), their first appearances within the field are somewhat more recent, in work of S\"{o}lin \cite{sjolin1974} (in two dimensions, generalized later by Drury and Marshall \cite{dm1985}) and Carbery and Ziesler \cite{cz2002}, respectively. Both measures have the property that they are independent of the parametrization and that they are unchanged when the curve or surface is transformed by a measure-preserving affine mapping. These measures and certain ``variable coefficient'' generalizations to families of curves and hypersurfaces have played a central role in the Fourier restriction problem as well as the problem of characterizing the $L^p$--$L^q$ mapping properties of geometrically-constructed convolution operators, two problems which have been of sustained interest for many years
\cites{drury1990,choi1999,oberlin1999II,dw2010,stovall2016,dlw2009,stovall2014,gressman2013,oberlin2012}.

The deep connections between analysis and geometry enjoyed by affine arclength and hypersurface measures naturally lead to the problem of generalizing these objects to manifolds of arbitrary dimension or even to abstract measure-theoretic settings. One particularly interesting approach is due to D.~Oberlin \cite{oberlin2000II} (which generalizes an earlier observation of Graham, Hare, and Ritter \cite{ghr1989} in one dimension), who introduced the following condition on nonnegative measures $\mu$ associated to submanifolds: a measure $\mu$ on a $d$-dimensional immersed submanifold of $\R^n$ will be said to satisfy the Oberlin condition with exponent $\alpha > 0$ when there exists a finite positive constant $C$ such that for every $K$ in the set $\mathcal K_n$ of compact convex subsets of $\R^n$,
\begin{equation}
\mu(K) \leq C |K|^\alpha, \label{oberlin}
\end{equation}
where $|K|$ represents the usual Lebesgue measure of $K$ in $\R^n$. When restricted to the class of balls with respect to the standard metric on $\R^n$, the condition \eqref{oberlin} becomes a familiar inequality from geometric measure theory. Unlike in that setting, here the exponent $\alpha$ measures not just dimension of the measure, but also a certain kind of curvature (for the simple reason that \eqref{oberlin} cannot hold for any $\alpha > 0$ when $\mu$ is supported on a hyperplane, which can be seen by taking $K$ to be increasingly thin in the direction transverse to such a hyperplane). Oberlin observed that this condition is necessary for Fourier restriction or $L^p$-improving estimates to hold; in particular,
\[ \ \left( \int |\hat f|^q d \mu \right)^\frac{1}{q} \lesssim ||f||_{L^p(\R^n)} \ \forall f \in L^p(\R^n) \Rightarrow \mu(K) \lesssim |K|^{\frac{q}{p'}} \ \forall K \in {\mathcal K}_n\]
and
\[ || f * \mu ||_{L^q(\R^n)} \lesssim ||f||_{L^p(\R^n)} \ \forall f \in L^p(\R^n) \Rightarrow \mu(K) \lesssim |K|^{\frac{1}{p} - \frac{1}{q}} \ \forall K \in {\mathcal K}_n, \]
where $\hat{\cdot}$ is the Fourier transform, $*$ is convolution. Here and throughout the paper, the notation $A \lesssim B$ means that there is a finite positive constant $C$ such that $A \leq C B$ and this constant $C$ is independent of the relevant variables (functions and sets in this case) appearing in the expressions or quantities $A$ and $B$. By virtue of known results for these two problems, the affine arclength and hypersurface measures must satisfy \eqref{oberlin} for appropriate exponents $\alpha$ when suitable regularity hypotheses on the submanifolds are imposed.  

The significance of the Oberlin condition \eqref{oberlin} for curves and hypersurfaces in $\R^n$ is that, up to a constant factor, the affine arclength and affine hypersurface measures on an immersed submanifold are the unique largest measures on the manifolds satisfying \eqref{oberlin} when $\alpha = 2 / (n^2+n)$ and $\alpha = (n-1)/(n+1)$, respectively. More precisely, in the case of hypersurfaces (first established by Oberlin \cite{oberlin2000II}), if $\mu$ is any nonnegative measure on an immersed hypersurface $\mathcal M \subset \R^n$, if $\mu$ satisfies \eqref{oberlin} with $\alpha = (n+1)/(n-1)$, then $\mu \lesssim \aff$ for affine hypersurface measure $\aff$ (where $\lesssim$ here means $\mu(E) \lesssim \aff(E)$ uniformly for all Borel sets $E$). Moreover, subject to certain algebraic limits on the complexity of the immersion, $\aff$ itself satisfies \eqref{oberlin} for this same exponent. The condition \eqref{oberlin} also turns out to be equivalent to the boundedness of certain geometrically-constructed multilinear determinant functionals \cite{gressman2010} and leads to a natural affine generalization of the classical Hausdorff measure \cite{oberlin2003}.
 
This paper examines the Oberlin condition for arbitrary $d$-dimensional submanifolds of $\R^n$ (where $1 \leq d \leq n$) and characterizes it in the case of maximal nondegeneracy. Specifically, the analogous results to those just mentioned above are established in all dimensions and codimensions: an affine invariant measure is constructed which is essentially the largest-possible measure satisfying the Oberlin condition for the largest nontrivial choice of $\alpha$.
To say that $\alpha$ is nontrivial means simply that there is a nonzero measure, satisfying \eqref{oberlin} for this $\alpha$, on some immersed submanifold of the given dimension and codimension. As in the case of curves and hypersurfaces, the largest nontrivial $\alpha$ can be understood as a ratio of the intrinsic dimension of the submanifold and its ``homogeneous dimension,'' which captures information about scaling and curvature-like properties to be measured. The correct value of homogeneous dimension is defined as follows: when $d$ and $n$ are fixed, let the homogeneous dimension $Q$ be defined to be the smallest positive integer which equals the sum of the degrees of some collection of $n$ distinct, nonconstant monomials in $d$ variables (see Figure \ref{thefig}). The main result of this paper is Theorem \ref{mainthm}:
\begin{theorem}
Suppose $\mathcal M$ is an immersed $d$-dimensional submanifold of $\R^n$ equipped with a nonnegative measure $\mu$. \label{mainthm} Then the following are true:
\begin{enumerate}
\item  \label{ocfails} If \eqref{oberlin} holds for $\mu$ with exponent $\alpha > d/Q$, then $\mu$ is the zero measure. 
\item There is a nonnegative measure $\aff$ on $\mathcal M$ such that if \eqref{oberlin} holds for $\mu$ when $\alpha = d/Q$, then $\mu \lesssim \aff$. \label{ocub}
\item Under certain algebraic constraints on the immersion of $\mathcal M$ in $\R^d$ (which are satisfied globally for polynomial embeddings and locally for real analytic embeddings), $\aff$ satisfies the Oberlin condition \eqref{oberlin} with $\alpha = d/Q$. \label{ocineq}
\item \label{nontriv} There is a $d$-dimensional submanifold $\M$ of $\R^n$ for which $\aff$ has everywhere strictly positive density with respect to Lebesgue measure on $\M$ and satisfies \eqref{oberlin} with $\alpha = d/Q$.
\end{enumerate}
\end{theorem}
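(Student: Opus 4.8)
All four assertions are local on $\M$, so the plan is to work in a chart with parametrization $\gamma\colon U\to\R^n$, $U\subseteq\R^d$ open, and reduce everything to the jets of $\gamma$. The geometric workhorse will be the elementary observation that for a box $B\subseteq U$ with sidelengths $\delta_1,\dots,\delta_d$ centered at $x_0$, the convex hull of $\gamma(B)$ lies in a translate of the zonotope generated by the vectors $\delta^{a}\,\partial^{a}\gamma(x_0)/a!$, $1\le|a|\le m$ (the Taylor remainder being negligible once $m$ is taken large), whose Lebesgue measure is, up to a constant, $\sum_{|S|=n}\big(\prod_{a\in S}\delta^{a}\big)\,|\det(\partial^{a}\gamma(x_0)/a!:a\in S)|$, where $\delta^{a}:=\prod_i\delta_i^{a_i}$, the sum is over $n$-element sets $S$ of nonzero multi-indices, and $\prod_{a\in S}\delta^{a}$ is a monomial of total degree $\Sigma_S:=\sum_{a\in S}|a|$ in $\delta_1,\dots,\delta_d$. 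Since $Q=\min_S\Sigma_S$, this ties $|K|$ for any convex $K\supseteq\gamma(B)$ to jet determinants and to the scaling inside $B$. I expect $\aff$ to have two descriptions that I would show coincide: a convex-geometric one, in which the density at $x_0$ is the $\delta\to 0$ limit (suitably normalized) of $\delta^{-d}$ times the infimum of $|K|^{d/Q}$ over convex $K\supseteq\gamma(B_\delta(x_0))$ — and hence is manifestly affine invariant — and an algebraic one, $\aff=|\mathcal J|^{d/Q}\,dx$ for a single $\SL_n(\R)$-invariant, reparametrization-covariant jet polynomial $\mathcal J$ of source-scaling weight $Q$, furnished essentially uniquely by Geometric Invariant Theory as the relative invariant of minimal positive degree with trivial $\SL_n$-weight; the locus $\{\mathcal J\ne 0\}$ is exactly where $\M$ is maximally nondegenerate.

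\noindent\textbf{Part~\eqref{ocfails}.} Partition the parameter region into cubes of side $\delta\le 1$; the zonotope volume bound together with $\Sigma_S\ge Q$ gives $|\operatorname{conv}\gamma(B)|\lesssim\delta^{Q}$ uniformly on compacta. Summing \eqref{oberlin} over the $\asymp\delta^{-d}$ cubes covering a fixed compact $U_0\subseteq U$ yields $\mu(\gamma(U_0))\lesssim\delta^{Q\alpha-d}\to 0$ as $\delta\to 0$ whenever $\alpha>d/Q$, so $\mu$ annihilates every chart and is therefore the zero measure.

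\noindent\textbf{Part~\eqref{ocub}.} The goal is $\mu(\gamma(E))\lesssim\int_E|\mathcal J|^{d/Q}$ for Borel $E$, and the delicate point is to obtain this where $\mathcal J$ vanishes, since $\aff$ must vanish there too (a nonzero measure supported where $\M$ can be flattened cannot satisfy \eqref{oberlin}). Where the jet at $x_0$ is maximally nondegenerate, a convex neighbourhood of $\gamma(B_\delta(x_0))$ adapted to the jet (isotropic in the nondegenerate directions, anisotropic where the jet degenerates) gives $\mu(\gamma(B_\delta(x_0)))\le C|K|^{d/Q}\lesssim|\mathcal J(x_0)|^{d/Q}\delta^{d}$, which a Vitali/differentiation argument promotes to the global bound. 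Where $\mathcal J(x_0)=0$, the plan is to invoke the Hilbert--Mumford criterion to produce a one-parameter subgroup along which $\gamma$ is flattened near $x_0$, convert this — made quantitative on arbitrary measurable sets by the higher-dimensional reverse Sobolev inequality — into arbitrarily thin convex slabs capturing macroscopic pieces of $\gamma(U)$, and feed these into \eqref{oberlin} to force the $\mu$-density to vanish there as well. Patching the two regimes gives $\mu\lesssim\aff$.

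\noindent\textbf{Parts~\eqref{ocineq} and~\eqref{nontriv}, and the main obstacle.} Under the hypotheses of \eqref{ocineq}, $U$ splits into finitely many pieces on each of which $\mathcal J$ either vanishes identically (nothing to prove) or has quantitatively controlled sublevel sets. For such a piece and $K\in\mathcal K_n$, I would slice $\gamma^{-1}(K)$ according to the dyadic size of the relevant jet determinants, bound $\int|\mathcal J|^{d/Q}$ on each slice via the reverse Sobolev estimate — this is where the generalization of \cite{gressman2009} and the frame-theoretic bounds on the determinant functionals are used — bound each such contribution by $|K|^{d/Q}$ by running the zonotope inequality in reverse, and sum. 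For \eqref{nontriv}, take the polynomial embedding $x\mapsto(x^{a_1},\dots,x^{a_n})$ whose monomials $x^{a_i}$ realize $Q$ (or a generic polynomial perturbation of it): ordering the jet along $S=\{a_1,\dots,a_n\}$ by a graded term order makes $\det(\partial^{a}\gamma:a\in S)$ the determinant of a triangular matrix with diagonal $\prod_j a_j!$, hence a nonzero constant, so $\mathcal J$ is bounded away from $0$ and \eqref{ocineq} applies. I expect the main obstacle to be Part~\eqref{ocineq}: controlling $\int_{\gamma^{-1}(K)}|\mathcal J|^{d/Q}$ uniformly in $K$ when $\mathcal J$ is small on large sets is precisely what the new higher-dimensional reverse Sobolev inequality must be engineered to accomplish, and obtaining it with sharp exponents — in concert with the GIT identification of $\mathcal J$ and the convex-geometric identification of $|K|$ — is the technical heart of the argument.
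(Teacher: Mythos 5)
Your plan for Parts \eqref{ocfails} and \eqref{ocub} lines up with the paper in spirit (Taylor expansion plus a convex-hull volume bound, then a density argument), but there is a structural misconception in your description of $\aff$ that undermines Part \eqref{nontriv}. You posit a \emph{single} jet polynomial $\mathcal J$, ``furnished essentially uniquely by GIT,'' with $\aff=|\mathcal J|^{d/Q}\,dx$. In the paper, the density is instead defined as a Kempf--Ness infimum $\inf_{M\in\SL(d,\R)}\|\rho_M\mathcal A\|$ raised to the power $d/k$, and the lemma that identifies this quantity with invariant theory shows it is comparable to $\max_j|p_j(\mathcal A)|^{1/d_j}$ over a \emph{finite generating set} $p_1,\dots,p_N$ of the invariant algebra --- which for general $(d,n)$ has more than one generator. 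The nullcone therefore typically has codimension greater than one, and no single determinant (or single relative invariant) witnesses nondegeneracy.

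This makes your argument for Part \eqref{nontriv} incorrect as stated. You claim the monomial embedding $x\mapsto(x^{a_1},\dots,x^{a_n})$ gives $\mathcal J$ bounded away from $0$ because the triangular jet matrix has nonzero determinant. But a nonzero entry of the jet tensor does not exclude membership in the nullcone. Concretely, take $d=2$, $n=3$, $Q=4$, and $\gamma(t_1,t_2)=(t_1,t_2,t_1^2)$. Then $\det(\partial_1\gamma,\partial_2\gamma,\partial_1^2\gamma)=2\neq 0$, yet for $M=\mathrm{diag}(t,t^{-1})\in\SL(2,\R)$ the pulled-back tensor has norm $\asymp t^2$, so $\inf_M\|\rho_M\mathcal A\|=0$ and $\aff\equiv 0$. (The adjacent embedding $(t_1,t_2,t_1t_2)$ does work, precisely because the monomial $t_1t_2$ is invariant under cyclic permutation of the coordinates.) The paper's proof of Part \eqref{nontriv} is not a generic-position argument: it explicitly builds the top-degree part of the embedding out of cyclically closed monomial orbits augmented by uniform normalized tight frames, and then verifies the Kempf--Ness/Richardson--Slodowy critical-point equations \eqref{firstp}--\eqref{secondp} to certify that the infimum is actually attained at the identity. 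Your fallback of ``a generic polynomial perturbation'' begs the question, since the existence of even one jet tensor outside the nullcone is precisely what Part \eqref{nontriv} asserts.

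Two smaller points. For Part \eqref{ocub}, the paper does not need a separate Hilbert--Mumford slab construction on $\{\mathcal J=0\}$: the pointwise estimate \eqref{oberlineq} holds for every $\SL(d,\R)$ choice of chart coordinates, and simply infimizing over $M$ produces the density bound $d\mu/dt\lesssim d\aff/dt$ everywhere, including where the density vanishes; your route, besides being heavier, risks circularity by importing the reverse Sobolev lemma (whose role in the paper is confined to Part \eqref{ocineq}). For Part \eqref{ocineq}, your dyadic decomposition in the size of jet determinants is a genuinely different organization from the paper's single multilinear functional $I(E)$; the paper's version cleanly gives $\aff(E)^{Q/d}\lesssim I(E)\lesssim |K|$ by one application of Lemma \ref{derivlemm} iterated inside the integral, and avoids having to sum dyadic scales, which in your scheme would need an argument to preclude a logarithmic loss.
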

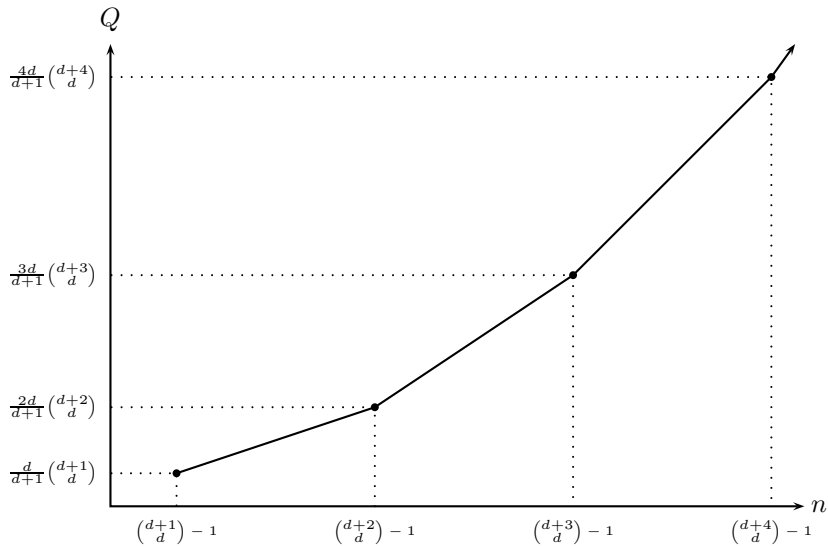
\begin{figure}
\begin{center}
\psset{unit=50pt}
\begin{pspicture}(0,0.25)(5.2,3.75)
\uput[90](0,3.75){$Q$} \uput[0](5.2,0.25){$n$}
\uput[-90](0.5,0.25){\tiny $\binom{d+1}{d}-1$} \uput[-90](2,0.25){\tiny $\binom{d+2}{d}-1$} \uput[-90](3.5,0.25){\tiny $\binom{d+3}{d} - 1$} \uput[-90](5,0.25){\tiny $\binom{d+4}{d}-1$}
\uput[-180](0,0.5){\tiny $\frac{d}{d+1} \binom{d+1}{d}$}
\uput[-180](0,1){\tiny $\frac{2d}{d+1} \binom{d+2}{d}$}
\uput[-180](0,2){\tiny $\frac{3d}{d+1} \binom{d+3}{d}$}
\uput[-180](0,3.5){\tiny $\frac{4d}{d+1} \binom{d+4}{d}$}
\psline[arrows=<->](0,3.75)(0,0.25)(5.25,0.25)
\psline[arrows=->](0.5,0.5)(2,1)(3.5,2)(5,3.5)(5.18,3.75)
\pscircle*(0.5,0.5){0.03} \pscircle*(2,1){0.03} \pscircle*(3.5,2){0.03} \pscircle*(5,3.5){0.03}
\psline[linestyle=dotted](0,0.5)(0.5,0.5)(0.5,0.25)
\psline[linestyle=dotted](0,1)(2,1)(2,0.25)
\psline[linestyle=dotted](0,2)(3.5,2)(3.5,0.25)
\psline[linestyle=dotted](0,3.5)(5,3.5)(5,0.25)
\end{pspicture}
\end{center}
\caption{This plot shows the homogeneous dimension $Q$ as a function of $n$ for $d$ fixed. The graph is piecewise linear with slope $k+1$ from the point $(\binom{d+k}{d}-1,\frac{k d}{d+1} \binom{d+k}{d})$ to the point $(\binom{d+k+1}{d}-1,\frac{(k+1) d}{d+1} \binom{d+k+1}{d })$ for each $k \geq 1$.} \label{thefig}
\end{figure}
Condition \ref{ocub} shows that the measure $\aff$ to be constructed (which is intrinsic, invariant under measure-preserving affine linear transformations of $\R^n$, and agrees up to normalization with affine arclength and equiaffine measure when $d=1,n-1$, respectively) is, up to a constant factor, the unique maximal measure on $\M$ which satisfies \eqref{oberlin} for $\alpha = d/Q$. This extends the result of Oberlin for equiaffine measure \cite{oberlin2000II} to submanifolds of any dimension.

The structure of the rest of this paper is as follows. In Section \ref{affinesec}, the measure $\aff$ is constructed by combining ideas of Kempf and Ness \cite{kn1979} from Geometric Invariant Theory together with a simple but far-reaching observation that any covariant tensor field on a manifold can be used to construct an associated measure on that manifold in a way that generalizes the relationship between the Riemanninan metric tensor and the Riemannian volume. In particular, the measure $\aff$ will be the measure associated to an ``affine curvature tensor'' on the manifold $\mathcal M$ immersed in $\R^n$. After these constructions are complete, Parts \ref{ocfails} and \ref{ocub} of Theorem \ref{mainthm} follow in a rather immediate way.

Section \ref{suffsec} is devoted to the proof of Parts \ref{ocineq} and \ref{nontriv} of Theorem 1. Part \ref{ocineq} is proved by first generalizing Theorem 1 of \cite{gressman2009} to higher dimensions. The result, Lemma \ref{derivlemm}, is interesting in its own right and will have important implications for the theory of $L^p$-improving estimates for averages over submanifolds in much the same way that Theorem 1 of \cite{gressman2009} formed the basis for a new proof of a restricted version Tao and Wright's result \cite{tw2003} for averages over curves. The final part of Section \ref{suffsec} shows that the measure $\aff$ is not trivial by constructing submanifolds on which it is possible to say with certainty that the density of $\aff$ with respect to Lebesgue measure is never zero.
Finally, Section \ref{appendix} establishes uniform estimates for the number of nondegenerate solutions of certain systems of equations. These estimates are important for Part \ref{ocineq} of Theorem \ref{mainthm}.

\section{Affine geometry and necessity}
\label{affinesec}
\subsection{Geometric Invariant Theory}
The main ideas and results from Geometric Invariant Theory that will be used in this paper come from the seminal work of Kempf and Ness \cite{kn1979} and its subsequent extension to real reductive algebraic groups by Richardson and Slodowsky \cite{rs1990}. The idea of interest is that, for suitable representations of such groups, one can study group orbits by understanding the infimum over the orbit of a certain vector space norm. For the purposes of this paper, it suffices to consider only representations of $\SL(d,\R)$ or $\SL(m,\R) \times \SL(d,\R)$ on vector spaces of tensors. In this context, the associated minimum vectors can be understood as normal forms of tensors and the actual numerical value of the infimum carries meaningful and important quantitative information about these tensors (in contrast to the usual situation in GIT in which one cares only about whether the infimum is zero or nonzero and whether or not it is attained). 

To begin the construction, suppose that $\mathcal A$ is any $k$-linear functional on a real vector space $V$ of dimension $d$. Appropriating the Kempf-Ness minimum vector calculations of GIT, it becomes possible to canonically associate a density functional $\aff : V^d \rightarrow \R_{\geq 0}$ to any such $\mathcal A$. Specifically, for any such $\mathcal A$ and any vectors $v_1,\ldots,v_d$, let $\aff(v_1,\ldots,v_d)$ be the quantity given by
\begin{equation}
 \begin{split}
 \aff(v_1,& \ldots,v_d) :=  \\
 & \left[ \inf_{M \in \mathrm{SL}(d,\R)} \sum_{j_1,\ldots,j_k=1}^d \left| \sum_{i_1,\ldots,i_k = 1}^d {\mathcal A} (M_{j_1 i_1} v_{i_1},\ldots, M_{j_k i_k} v_{i_k}) \right|^{2} \right]^{\frac{d}{2k}}.
 \end{split} \label{affdef}
 \end{equation}
 Before showing that the quantity \eqref{affdef} is a density functional, it is worthwhile to acknowledge the algebraic structure that lies behind it. When the $d$-tuple of vectors $v := (v_1,\ldots,v_d) \in V^{d}$ are linearly independent, one may define a representation $\rho^v_{\cdot} : \SL(d,\R) \times V \rightarrow V$ by setting
 \begin{equation} \rho^v_M(v_j) := \sum_{i=1}^d M_{ij} v_i \label{reptn} \end{equation}
 for each $j=1,\ldots,d$ and then extending to all of $V$ by linearity.
 This representation extends to act on $k$-linear functionals by duality, i.e., 
 \[ \left( \rho_{M}^v \mathcal A \right) (v_{j_1},\ldots,v_{j_k}) := \mathcal A (\rho^v_{M^T} v_{j_1},\ldots,\rho^v_{M^T} v_{j_k}), \] where $M^T$ is the transpose of $M$. If we further define a norm on the space of $k$-linear functionals by means of the formula
 \begin{equation*} || \mathcal A ||_v^2 := \sum_{j_1=1}^d \cdots \sum_{j_k = 1}^d |\mathcal A(v_{j_1},\ldots,v_{j_k})|^2  \end{equation*}
 then the formula \eqref{affdef} becomes
 \[  \aff(v_1,\ldots,v_d) = \left( \inf_{M \in \SL(d,\R)} || \rho_M^v \mathcal A||_v\right)^{\frac{d}{k}}. \]
 
 To see that $\aff$ is a density as promised, the first step is to demonstrate that $\aff(v_1,\ldots,v_d) = 0$ when $v_1,\ldots,v_d$ are linearly dependent. In this case, there must exist an invertible matrix $M$ such that $\sum_{i=1}^d M_{1 i} v_i = 0$, and without loss of generality, one may assume that this matrix $M$ has been normalized so as to belong to $\mathrm{SL}(d,\R)$. Now for each $t > 0$, let $M^{(t)}$ be the transpose of the matrix obtained by scalar multiplying the first row of $M$ by $t^{d-1}$ and all remaining rows by $t^{-1}$. These matrices $M^{(t)}$ belong to $\mathrm{SL}(d,\R)$ for all $t > 0$, and
 \[ \rho^v_{M^{(t)}} {\mathcal A} = t^{-k} \rho^{v}_{M} {\mathcal A} \]
by multilinearlity of $\mathcal A$ because 
\begin{equation} {\mathcal A}(M_{j_1 i_1}^{(t)} v_{i_1}, \ldots, M_{j_k i_k}^{(t)} v_{i_k}) = t^{-k} {\mathcal A} (M_{j_1 i_1} v_{i_1},\ldots, M_{j_k i_k} v_{i_k}) \label{scaling} \end{equation}
by homogeneity if each $j_1,\ldots,j_k$ is not equal to one, and if any index $j_\ell$ does equal one, then both sides vanish, making the equality \eqref{scaling} true trivially. Taking $t \rightarrow \infty$ shows that the infimum in \eqref{affdef} over all $\mathrm{SL}(d,\R)$ must vanish when $v_1,\ldots,v_d$ are linearly dependent. 

Now let $T$ be any linear transformation of $V$. When $v_1,\ldots,v_d$ are linearly dependent or when $T$ is not invertible, $T v_1,\ldots,T v_d$ will be linearly dependent, so it must hold that
\[ \aff(T v_1,\ldots, T v_d) = |\det T| \aff (v_1,\ldots,v_d) = 0. \]
Otherwise, when $v_1,\ldots,v_d$ are linearly independent and $T$ is invertible, there is a matrix $P \in \mathrm{GL}(d,\R)$ with $\det P = \det T$ such that $T v_j = \sum_{i=1}^d P_{ji} v_i$ for each $j=1,\ldots,d$. Factor $P$ as $\pm |\det P|^{\frac{1}{d}} P'$ for some $P'$ with $|\det P'| = 1$ in general and $\det P' =1$ when $d$ is odd. Once again, by multilinearity of $\mathcal A$, 
 \begin{align}
& \sum_{j_1,\ldots,j_k=1}^d  \left| \sum_{i_1,\ldots,i_k = 1}^d {\mathcal A} (M_{j_1 i_1} T v_{i_1},\ldots, M_{j_k i_k} T v_{i_k}) \right|^2 \nonumber
  \\  & \ \ =  | \det T|^{\frac{2k}{d}} \sum_{j_1,\ldots,j_k=1}^d \left| \sum_{i_1,\ldots,i_k,\ell_1,\ldots,\ell_k = 1}^d  {\mathcal A} (M_{j_1 i_1} P_{i_1 \ell_1} ' v_{\ell_1},\ldots,M_{j_k i_k} P_{i_k \ell_k}' v_{\ell_k}) \right|^2  \nonumber \\
  & \ \ =  |\det T|^{\frac{2k}{d}} \sum_{j_1,\ldots,j_k=1}^d  \left| \sum_{i_1,\ldots,i_k = 1}^d {\mathcal A} ((MP')_{j_1 i_1} v_{i_1},\ldots, (MP')_{j_k i_k}  v_{i_k}) \right|^2. \label{min0}
 \end{align} 
Since $\mathrm{SL}(d,\R)$ is a group, the set of matrices of the form $MP'$ when $M \in \SL(d,\R)$ is itself exactly $\SL(d,\R)$ assuming that $\det P' = 1$. If $\det P' = -1$, then the matrices $M P'$ for $M \in \SL(d,\R)$ are exactly those matrices $N$ which belong to $\SL(d,\R)$ after the first two rows of $N$ are interchanged. Since \eqref{min0} is invariant under permutations of the rows of $MP'$, it follows in both cases ($\det P' = \pm 1$) that
\begin{align*}
 \inf_{M \in \SL(d,R)}   \sum_{j_1,\ldots,j_k=1}^d & \left| \sum_{i_1,\ldots,i_k = 1}^d {\mathcal A} ((MP')_{j_1 i_1} v_{i_1},\ldots, (MP')_{j_k i_k}  v_{i_k}) \right|^2 \\ & = \left[ \aff(v_1,\ldots,v_d) \right]^{\frac{2k}{d}}, 
 \end{align*}
which gives the desired identity
\[ \mu_{\mathcal A}( T v_1,\ldots,T v_d ) = |\det T| ~ \mu_{\mathcal A}(v_1,\ldots,v_d) \]
for any $v_1,\ldots,v_d$ and any linear transformation $T$, as asserted.

\begin{example}
It is illuminating to compute $\aff$ in the specal case when $\mathcal A$ is a symmetric bilinear form. Fix linearly independent vectors $v_1,\ldots,v_d$ and define the matrix $A$ by $A_{i j} := {\mathcal A}(v_i,v_j)$. It follows that
\[ (\rho^v_M \mathcal A)(v_{j_1},v_{j_2}) = (M A M^T)_{j_1 j_2} \mbox{ and } || \rho^v_M \mathcal A||_v^2 = \mathrm{tr} ( M A M^T M A M^T). \]
Now $M A M^T M A M^T$ is symmetric and positive semidefinite, so its eigenvalues are all nonnegative. Thus the AM-GM inequality implies that
\[ d (\det ( M A M^T M A M^T))^\frac{1}{d} \leq \mathrm{tr} (M A M^T M A M^T)  \]
with equality when all eigenvalues are equal (which, when $A$ is invertible, can be attained for some $M \in \mathrm{SL}(d,\R)$ by building $M$ from a basis of unit-length eigenvectors of $A$ with respect to some inner product and then rescaling the eigenvectors appropriately). Because $\det M = 1$, $\det (M A M^T M A M^T) = (\det A)^2$ and therefore
\[ \aff (v_1,\ldots,v_d) = d^\frac{d}{4} |\det A|^{\frac{1}{2}}. \]
In particular, on a Riemannian manifold, setting $\mathcal A$ equal to the metric tensor $g$ yields a tensor density $\aff$ which is exactly equal to a dimensional constant times the corresponding Riemannian volume density. 
\end{example}

At this point, the reader may be somewhat understandably disappointed by the abstract nature of the infimum appearing in \eqref{affdef} since it is not immediately apparent how to compute the infimum in finitely many operations. However, the abstract nature of the definition \eqref{affdef} turns out to be a blessing rather than a curse, because it effectively allows the analysis to entirely sidestep the very deep and rich algebraic question of what \eqref{affdef} computes. It turns out that \eqref{affdef} is deeply connected, both algebraically and analytically, to the problem of finding polynomials in the entries of the tensor $A_{j_1,\ldots,j_k} = \mathcal A(v_{j_1},\ldots,v_{j_k})$ which are invariant under the action of the representation $\rho^v_\cdot$. Hilbert \cite{hilbert1893} showed that, when the group $\SL(d,\R)$ is replaced by $\SL(d,\C)$, there are finitely many polynomials invariant under the action of $\SL(d,\C)$ which generate the algebra of all such invariant polynomials. From this fact it is easy to see that the same result must be true for $\SL(d,\R)$ itself. A vast body of literature shows (via Weyl's unitarian trick) shows that the same result holds for representations of any group $G$ which is a real reductive algebraic group (which, as far as the present paper is concerned, is a class which includes $\SL(d,\R)$ and is closed under Cartesian products). It is possible in principle to compute these polynomials explicitly in finite time (see Sturmfels \cite{sturmfelsbook}), but in general going about the calculation in this way is somewhat unwieldy and akin to the computation of the determinant via its permutation expansion rather than by more efficient, symmetry-exploiting techniques. In any case, the density \eqref{affdef} simultaneously captures the behavior of all invariant polynomials at once, as demonstrated by the following lemma:
\begin{lemma}
Suppose that $G$ is a real reductive algebraic group and that $\rho$ is a $G$-representation on some finite-dimensional real vector space $V$ equipped with a norm $|| \cdot ||$. Let $p_1,\ldots,p_N$ be any collection of homogeneous polynomials of positive degree in $V$ which generate the algebra of all $G$-invariant polynomials. Then there exist constants $0 < C_1 \leq C_2 < \infty$ such that
\begin{equation} C_1 \max_{j=1,\ldots,N} |p_j(\mathcal A)|^{\frac{1}{d_j}} \leq \inf_{M \in G} ||\rho_M \mathcal A|| \leq C_2 \max_{j=1,\ldots,N} |p_j(\mathcal A)|^{\frac{1}{d_j}} \mbox{ for all } \mathcal A \in V. \label{norm} \end{equation}
\end{lemma}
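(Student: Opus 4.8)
The plan is to prove the two inequalities of \eqref{norm} separately, writing $\Phi(\mathcal{A}) := \inf_{M \in G} ||\rho_M \mathcal{A}||$ and $P(\mathcal{A}) := \max_{1 \le j \le N} |p_j(\mathcal{A})|^{1/d_j}$, where $d_j := \deg p_j \ge 1$. Since each $p_j$ is homogeneous of degree $d_j$, both $\Phi$ and $P$ are positively homogeneous of degree one, so one may normalize $\mathcal{A}$ to the compact unit sphere $S := \{ ||\mathcal{A}|| = 1 \}$ whenever convenient; note also that $P$ is continuous, that $\Phi$ (an infimum of the continuous functions $\mathcal{A} \mapsto ||\rho_M \mathcal{A}||$) is upper semicontinuous, and that $\Phi(\mathcal{A}) \le ||\mathcal{A}||$. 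The lower bound is then immediate: homogeneity gives $|p_j(\mathcal{B})| \le c_j ||\mathcal{B}||^{d_j}$ for all $\mathcal{B}$, with $c_j := \sup_S |p_j| < \infty$, and $G$-invariance of $p_j$ gives $|p_j(\mathcal{A})| = |p_j(\rho_M \mathcal{A})| \le c_j ||\rho_M \mathcal{A}||^{d_j}$ for every $M \in G$; taking $d_j$-th roots, maximizing over $j$, and taking the infimum over $M$ yields $\Phi(\mathcal{A}) \ge C_1 P(\mathcal{A})$ with $C_1 := (\max_j c_j^{1/d_j})^{-1} > 0$.

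The real content is the upper bound, and the strategy is to reduce it to a boundedness statement about \emph{minimal vectors}, meaning vectors $w$ with $||\rho_M w|| \ge ||w||$ for all $M \in G$, equivalently $||w|| = \Phi(w)$; this set is invariant under positive rescaling. First, the infimum defining $\Phi(\mathcal{A})$ is always attained: a minimizing sequence $\rho_{M_n}\mathcal{A}$ has bounded norms, hence a subsequence converging to some $\mathcal{A}^\ast \in \overline{G\mathcal{A}}$ with $||\mathcal{A}^\ast|| = \Phi(\mathcal{A})$; continuity and $G$-invariance of the $p_j$ give $p_j(\mathcal{A}^\ast) = p_j(\mathcal{A})$, hence $P(\mathcal{A}^\ast) = P(\mathcal{A})$, and $||\rho_M \mathcal{A}^\ast|| = \lim_n ||\rho_{M M_n}\mathcal{A}|| \ge \Phi(\mathcal{A}) = ||\mathcal{A}^\ast||$ shows $\mathcal{A}^\ast$ is minimal. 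Now suppose it is known that $B := \sup\{ ||w|| : w \text{ minimal}, \ P(w) \le 1 \}$ is finite. If $P(\mathcal{A}) = t > 0$, then $t^{-1}\mathcal{A}^\ast$ is a minimal vector with $P(t^{-1}\mathcal{A}^\ast) = 1$, so $\Phi(\mathcal{A}) = ||\mathcal{A}^\ast|| = t\,||t^{-1}\mathcal{A}^\ast|| \le t B = B\,P(\mathcal{A})$; the case $P(\mathcal{A}) = 0$ is treated below. Thus $C_2 := \max(B, C_1)$ will complete the proof, and everything comes down to the finiteness of $B$.

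The finiteness of $B$ --- which says informally that a vector can always be moved by the group action down to size comparable to the values of its invariants --- is the one step where Geometric Invariant Theory is genuinely used, and I expect it to be the main obstacle. The argument is by contradiction and compactness: if $w_n$ are minimal with $P(w_n) \le 1$ but $||w_n|| \to \infty$, then $u_n := w_n/||w_n|| \in S$ satisfy $\Phi(u_n) = 1$ (by homogeneity and minimality of $w_n$) and $P(u_n) \le ||w_n||^{-1} \to 0$; passing to a subsequence with $u_n \to u_\infty \in S$, continuity of $P$ gives $P(u_\infty) = 0$, while upper semicontinuity of $\Phi$ together with $\Phi(u_\infty) \le ||u_\infty|| = 1$ forces $\Phi(u_\infty) = 1$. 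On the other hand $P(u_\infty) = 0$ means $p_1(u_\infty) = \cdots = p_N(u_\infty) = 0$, so $u_\infty$ lies in the null cone; since the null cone coincides with $\{\mathcal{A} : 0 \in \overline{G\mathcal{A}}\}$ by the Hilbert--Mumford criterion in the form valid for real reductive groups (see Kempf--Ness \cite{kn1979} and its real analogue Richardson--Slodowy \cite{rs1990}), we get $0 \in \overline{G u_\infty}$ and hence $\Phi(u_\infty) = 0$, contradicting $\Phi(u_\infty) = 1$. The same null-cone description settles the leftover case $P(\mathcal{A}) = 0$: then $\mathcal{A}^\ast$ is a minimal vector lying in the null cone, so $||\mathcal{A}^\ast|| = \Phi(\mathcal{A}^\ast) = 0$ and therefore $\Phi(\mathcal{A}) = 0 = C_2 P(\mathcal{A})$. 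Beyond this single appeal to the real Hilbert--Mumford criterion, the proof uses only elementary compactness, continuity, and homogeneity.
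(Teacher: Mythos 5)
Your proof is correct and takes essentially the same approach as the paper's: the lower bound by homogeneity and $G$-invariance of the $p_j$, and the upper bound by contradiction via a compactness argument and the real Hilbert--Mumford criterion identifying the nullcone with the set where the orbit closure meets the origin. The extra scaffolding you introduce (attainment of the infimum, minimal vectors, the auxiliary bound $B$, and upper semicontinuity of $\Phi$ in place of the paper's pointwise continuity of $\|\rho_M \cdot\|$ for each fixed $M$) repackages but does not alter the core argument.
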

\begin{proof}
To prove the first inequality, observe by scaling that
\[ ||p_j||^{-\frac{1}{d_j}}_\infty |p_j(\mathcal A)|^{\frac{1}{d_j}} \leq ||\mathcal A|| \]
for all $j$ and all $\mathcal A \in V$, where $||p_j||_\infty$ is the supremum of $p_j$ on the unit sphere of $||\cdot||$. Moreover, because each $p_j$ is invariant under $\rho$,
\[ ||p_j||^{-\frac{1}{d_j}}_\infty |p_j(\mathcal A)|^{\frac{1}{d_j}} = ||p_j||^{-\frac{1}{d_j}}_\infty |p_j(\rho_M \mathcal A)|^{\frac{1}{d_j}} \leq ||\rho_M \mathcal A||, \]
so taking an infimum in $M$ and a supremum in $j$ gives
\[ \left[ \min_{j=1,\ldots,N} ||p_j||_\infty^{-\frac{1}{d_j}} \right] \max_{j = 1,\ldots,N} |p_j(\mathcal A)|^{\frac{1}{d_j}} \leq \inf_{M \in G} ||\rho_M \mathcal A||. \]
To prove the reverse inequality, suppose for the sake of contradiction that it does not hold for any finite $C$. 
Because the inequality is homogeneous in the norm $|| \cdot ||$, its failure would imply that one could find a sequence $\mathcal A_k$ with $\inf_M || \rho_M \mathcal A_k|| = 1$ for all $k$ such that
\[ \max_{j} |p_j (\mathcal A_k)|^{\frac{1}{d_j}} \leq k^{-1} \inf_{M \in G} || \rho_M \mathcal A_k|| = k^{-1}. \]
Moreover, by replacing $\mathcal A_k$ by $\rho_{M_k} \mathcal A_k$ for suitable $M_k$ and taking a subsequence by compactness, it may be assumed that $\mathcal A_k$ converges to some $\mathcal A$ in the unit sphere as $k \rightarrow \infty$.  By continuity of the polynomials $p_j$, $p_j(\mathcal A) = 0$ for all $j$. Therefore $\mathcal A$ belongs to the so-called nullcone of the representation and by the real Hilbert-Mumford criterion, first proved by Birkes \cite{birkes1971}, there must exist a one-parameter subgroup $\rho_{\exp(t X)}$ of $G$ such that $\rho_{\exp(t X)}\mathcal A \rightarrow 0$ as $t \rightarrow \infty$. This, of course, implies that $\inf_M ||\rho_M \mathcal A|| = 0$. However $\inf_{M} ||\rho_M \mathcal A_k|| = 1$ for all $k$
implies that $||\rho_M \mathcal A_k|| \geq 1$ for all $M \in G$ and all $k$, which means by continuity that $||\rho_M \mathcal A|| \geq 1$ for all $M$, so $\inf_M ||\rho_M \mathcal A|| = 0$ must be contradicted.
\end{proof}
The inequality \eqref{norm} shows that the numerical value of $\aff(v_1,\ldots,v_d)$ is, in rough analogy with the symmetric bilinear form example just examined, comparable to the maximum of appropriate powers of the invariant polynomials applied to $A_{j_1,\ldots,j_k} = \mathcal A(v_{j_1},\ldots,v_{j_k})$. It is also worth observing that when many invariant polynomials exist (which, unlike the symmetric bilinear form case, is generally the more common situation), the nullcone of tensors $\mathcal A$ for which $\aff = 0$ will have codimension greater than one. In terms of affine curvature, this will mean that for general submanifolds of dimension $d$ in $\R^n$, it is typically ``easier'' to have nonvanishing affine curvature than it is in the case of hypersurfaces because the space of ``flat'' Taylor polynomial jets which must be avoided is often of codimension greater than one.

\subsection{Construction of the affine curvature tensor and associated measure}

\label{construction}
\psset{unit=15pt}
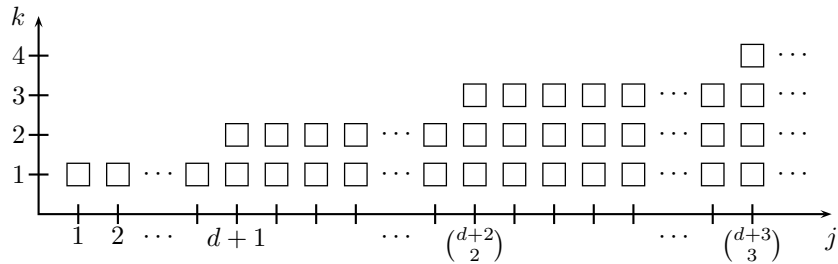
\begin{figure} 
\begin{center} 
\begin{pspicture}(0,0)(20,5)
\psline[arrows=<->](0,5)(0,0)(20,0)
\uput[-90](20,0){$j$} 
\uput[180](0,5){$k$}
\psline(-0.25,1)(0.25,1) \uput[180](0,1){$1$}
\psline(-0.25,2)(0.25,2) \uput[180](0,2){$2$}
\psline(-0.25,3)(0.25,3) \uput[180](0,3){$3$}
\psline(-0.25,4)(0.25,4) \uput[180](0,4){$4$}

\psline(1,-0.25)(1,0.25) \uput[-90](1,0){$1$}
\psline(2,-0.25)(2,0.25) \uput[-90](2,0){$2$}
\uput[-90](3.07,-0.1){$\cdots$} \uput[-90](3.07,1.5){$\cdots$}
\psline(4,-0.25)(4,0.25) \uput[-90](5,0){$d+1$}
\psline(5,-0.25)(5,0.25)
\psline(6,-0.25)(6,0.25) 
\psline(7,-0.25)(7,0.25)
\psline(8,-0.25)(8,0.25) 
\uput[-90](9.07,-0.1){$\cdots$} \uput[-90](9.07,1.5){$\cdots$} \uput[-90](9.07,2.5){$\cdots$}
\psline(10,-0.25)(10,0.25) 
\psline(11,-0.25)(11,0.25) \uput[-90](11,0){$\binom{d+2}{2}$}
\uput[-90](16.07,-0.1){$\cdots$} \uput[-90](16.07,1.5){$\cdots$} \uput[-90](16.07,2.5){$\cdots$} \uput[-90](16.07,3.5){$\cdots$}
 \uput[-90](19.07,1.5){$\cdots$} \uput[-90](19.07,2.5){$\cdots$} \uput[-90](19.07,3.5){$\cdots$} \uput[-90](19.07,4.5){$\cdots$}
\psline(12,-0.25)(12,0.25) 
\psline(13,-0.25)(13,0.25) 
\psline(14,-0.25)(14,0.25)
\psline(15,-0.25)(15,0.25) 
\psline(17,-0.25)(17,0.25) 
\psline(18,-0.25)(18,0.25)  \uput[-90](18,0){$\binom{d+3}{3}$}
\psdot[dotstyle=square,dotsize=10pt](1,1)
\psdot[dotstyle=square,dotsize=10pt](2,1)
\psdot[dotstyle=square,dotsize=10pt](4,1)
\psdot[dotstyle=square,dotsize=10pt](5,1)
\psdot[dotstyle=square,dotsize=10pt](5,2)
\psdot[dotstyle=square,dotsize=10pt](6,1)
\psdot[dotstyle=square,dotsize=10pt](6,2)
\psdot[dotstyle=square,dotsize=10pt](7,1)
\psdot[dotstyle=square,dotsize=10pt](7,2)
\psdot[dotstyle=square,dotsize=10pt](8,1)
\psdot[dotstyle=square,dotsize=10pt](8,2)
\psdot[dotstyle=square,dotsize=10pt](10,1)
\psdot[dotstyle=square,dotsize=10pt](10,2)
\psdot[dotstyle=square,dotsize=10pt](11,1)
\psdot[dotstyle=square,dotsize=10pt](11,2)
\psdot[dotstyle=square,dotsize=10pt](11,3)
\psdot[dotstyle=square,dotsize=10pt](12,1)
\psdot[dotstyle=square,dotsize=10pt](12,2)
\psdot[dotstyle=square,dotsize=10pt](12,3)
\psdot[dotstyle=square,dotsize=10pt](13,1)
\psdot[dotstyle=square,dotsize=10pt](13,2)
\psdot[dotstyle=square,dotsize=10pt](13,3)
\psdot[dotstyle=square,dotsize=10pt](14,1)
\psdot[dotstyle=square,dotsize=10pt](14,2)
\psdot[dotstyle=square,dotsize=10pt](14,3)
\psdot[dotstyle=square,dotsize=10pt](15,1)
\psdot[dotstyle=square,dotsize=10pt](15,2)
\psdot[dotstyle=square,dotsize=10pt](15,3)

\psdot[dotstyle=square,dotsize=10pt](17,1)
\psdot[dotstyle=square,dotsize=10pt](17,2)
\psdot[dotstyle=square,dotsize=10pt](17,3)
\psdot[dotstyle=square,dotsize=10pt](18,1)
\psdot[dotstyle=square,dotsize=10pt](18,2)
\psdot[dotstyle=square,dotsize=10pt](18,3)
\psdot[dotstyle=square,dotsize=10pt](18,4)
\end{pspicture}
\end{center}
\caption{In the diagram above, squares represent points in $\Z \times \Z$. The index set $\Lambda_{d,n}$ is simply the union of the first $n$ columns, and the homogeneous dimension $Q$ is simply the cardinality of $\Lambda_{d,n}$. In terms of the tensor $\mathcal A_p$, the number of boxes in each column indicates how many derivatives are applied to $f$ in the corresponding factor of the wedge product (or equivalently, the corresponding column of the matrix). \label{secondfig}}
\end{figure}

We move now to the construction of a covariant tensor which captures the affine curvature we are interested in. This tensor will be given an associated density using the formula \eqref{affdef} which can be integrated to give a canonical measure on immersed submanifolds $\mathcal M \subset \R^n$.

Suppose that $\mathcal M$ is a manifold of dimension $d$ which is equipped with a smooth immersion $f : \mathcal M \rightarrow \R^n$. For convenience, let the values of $f$ be regarded as column vectors.  For any positive integer $j$, let $\kappa_j$ be the smallest integer such that the dimension of the space $P_d^{\kappa_j}$ of real polynomials of degree $\kappa_j$ in $d$ variables has dimension at least $j+1$, and let $\Lambda_{d,n}$ be the index set
\[ \Lambda_{d,n} := \set{(j,k) \in \Z \times \Z}{ 1 \leq j \leq n \mbox{ and } 1 \leq k \leq \kappa_j  }. \]
The index set $\Lambda_{d,n}$ is represented pictorially in Figure \ref{secondfig} as the first $n$ columns of boxes.
The cardinality of $\Lambda_{d,n}$ is exactly the homogeneous dimension $Q$ defined in the introduction. We are going to define a $Q$-linear covariant tensor $\mathcal A_p$ at each point $p \in \mathcal M$ which captures the affine geometry of the immersion $f$. We will denote the action of $\mathcal A_p$ on $Q$-tuples of vectors by either
\[ {\mathcal A}_p(X_1,\ldots,X_Q) \mbox{ or } {\mathcal A}_p ((X_\lambda)_{\lambda \in \Lambda_{d,n}}) \]
depending on which approach is most convenient at the moment (where we lexicographically order the elements of $\Lambda_{d,n}$ when such an order is not otherwise specified).

Now for any finite sequence of vector fields $X_\lambda$ indexed by $\lambda \in \Lambda_{d,n}$, let
\begin{equation}  
\begin{split}
 {\mathcal A}_p( (X_\lambda)_{\lambda \in \Lambda_{d,n}}) := & \det (  X_{(1,1)} f(p) \wedge  \cdots \wedge   \\
 & X_{(j,1)} \cdots X_{(j,\kappa_j)} f(p) \wedge \cdots \wedge X_{(n,1)} \cdots X_{(n,\kappa_n)} f(p)).
 \end{split}  \label{adef}
 \end{equation}
Here the determinant of an $n$-fold wedge of vectors in $\R^n$ is understood to equal the determinant of the $n \times n$ matrix whose columns are the factors of the wedge (technically these factors are not unique, but the antisymmetry of the determinant and of wedge products guarantees the same determinant for any factorization). In other words, \eqref{adef} equals the determinant of an $n \times n$ matrix whose $j$-th column is the column vector $X_{(j,1)} \cdots X_{(j,\kappa_j)} f(p)$.
 (Note also that the lexicographic order on $\Lambda_{d,n}$ corresponds exactly to the order that each $\lambda \in \Lambda_{d,n}$ appears in the above formula when moving from left to right; with respect to Figure \ref{secondfig}, the order is left-to-right followed by bottom-to-top.)
 
This object $\mathcal A_p$ will be called the affine curvature tensor at $p$. First observe that it is certainly linear in $X_\lambda$ for each $\lambda \in \Lambda_{d,n}$. To see that $\mathcal A_p$ depends only on the pointwise values of the $X_\lambda$ at $p$ and not any derivatives of these vector fields, it suffices to show that any single one of the vector fields $X_\lambda$ may be replaced by any other vector field $X_\lambda'$ agreeing with $X_\lambda$ at $p$ without changing the value of $\mathcal A_p$. For any indices $\lambda = (j,k)$ such that $\kappa_j = 1$, this invariance under replacement follows immediately from the fact that these vector fields appear alone in their own column (i.e., the formula \eqref{adef} contains no derivatives of $X_\lambda$ to begin with). For any $\lambda = (j,k)$ with $\kappa_j > 1$, the identity
\begin{align*}
X_{(j,1)} & \cdots  X_{(j,k)} \cdots X_{(j,\kappa_j)} f(p) - X_{(j,1)} \cdots X'_{(j,k)} \cdots X_{(j,\kappa_j)} f(p) \\
& = X_{(j,1)} \cdots X_{(j,k-1)} [ X_{(j,k-1)}, X_{(j,k)} - X'_{(j,k)}] X_{(j,k+1)} \cdots X_{(j,\kappa_j)} f(p) \\
& + \cdots + [X_{(j,1)}, X_{(j,k)} - X_{(j,k)}'] X_{(j,2)} \cdots \widehat{X_{(j,k)}} \cdots X_{(j,\kappa_j)} f(p)
\end{align*}
(where $\widehat{\cdot}$ indicates omission of $X_{(j,k)}$ in its usual place)  shows that $\mathcal A_p$ vanishes when $X_{\lambda}$ is replaced by $X_{\lambda} - X'_{\lambda}$: the number of columns of the matrix in \eqref{adef} for which $f$ is differentiated to some order between $1$ and $\kappa_j - 1$ is strictly greater than the dimension of the vector space generated by such operators, so there must be linearly dependent columns in the matrix, which forces $\mathcal A_p$ to vanish.

The measure $\aff$ on the submanifold which will be shown under suitable additional hypotheses to satisfy \eqref{oberlin} is exactly that measure whose density is given from the tensor $\mathcal A_p$ by the formula \eqref{affdef}.

\subsection{Proof of Parts \ref{ocfails} and \ref{ocub} of Theorem \ref{mainthm}}

We begin with the following elementary lemma which gives an estimate for the volume of the convex hull of certain sets $S \subset \R^n$:
\begin{lemma}
Suppose $S \subset \R^n$ is a compact set containing the origin, and let $K$ be its convex hull. There exist $v_1,\ldots,v_n \in S$ such that the sets \label{convexlem}
\[ K_1 := \set{v \in \R^n}{v = \sum_{i=1}^n c_i v_i \mbox{ for coefficients } c_i \mbox{ such that } \sum_{i=1}^n |c_i| \leq 1} \]
and 
\[ K_\infty := \set{v \in \R^n}{v = \sum_{i=1}^n c_i v_i \mbox{ for coefficients } c_i \in [-1,1], \ i = 1,\ldots,n} \]
satisfy
\begin{equation} K_1 \subset K \subset K_\infty. \label{containment} \end{equation}
In particular,
\begin{equation} \frac{2^n}{n!} |\det (v_1 \wedge \cdots \wedge v_n)| \leq |K| \leq 2^n |\det (v_1 \wedge \cdots \wedge v_n)|. \label{volume} \end{equation}
\end{lemma}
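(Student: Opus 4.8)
The plan is to choose $v_1,\ldots,v_n$ to be a \emph{maximizing frame}: vectors of $S$ for which $|\det(v_1\wedge\cdots\wedge v_n)|$ is as large as possible. Such a choice exists because $S^n$ is compact and the determinant is continuous. If the maximal value is $0$, then $S$ lies in a proper subspace $W$; in that degenerate case I would run the argument inside $W$, pad the list with $v_i=0$ for the remaining indices, and observe that $K_1$, $K$, and $K_\infty$ all have Lebesgue measure $0$, so that \eqref{volume} holds trivially. Hence we may assume the maximal value $V:=|\det(v_1\wedge\cdots\wedge v_n)|$ is positive; in particular $v_1,\ldots,v_n$ is a basis of $\R^n$.

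For the right-hand containment in \eqref{containment}, I would take any $w\in S$ and expand $w=\sum_i c_i v_i$ in this basis. Replacing the $i$-th factor of the wedge by $w$ and using multilinearity and antisymmetry of the determinant (Cramer's rule),
\[ |c_i|\,V = \bigl|\det(v_1\wedge\cdots\wedge v_{i-1}\wedge w\wedge v_{i+1}\wedge\cdots\wedge v_n)\bigr| \le V, \]
the final inequality being the maximality of $V$ (all of $w,v_1,\ldots,\widehat{v_i},\ldots,v_n$ lie in $S$). Thus $|c_i|\le 1$ for every $i$, i.e.\ $w\in K_\infty$; since $K_\infty$ is convex this gives $K=\mathrm{conv}(S)\subseteq K_\infty$. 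For the left-hand containment, note that the cross-polytope $K_1$ is exactly the convex hull of the points $\pm v_1,\ldots,\pm v_n$ (the origin being the midpoint of $v_1$ and $-v_1$); each of these lies in $S$ — here one uses that $S$ is symmetric about the origin — hence in $K$, and convexity of $K$ then yields $K_1\subseteq K$. The volume estimate \eqref{volume} is immediate from \eqref{containment} by monotonicity of Lebesgue measure together with $|K_\infty|=2^n V$ and $|K_1|=\tfrac{2^n}{n!}V$, which are the volumes of the images of the cube $[-1,1]^n$ and the $\ell^1$-ball under the linear map sending the standard basis to $v_1,\ldots,v_n$.

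The routine parts are the two volume computations and the reduction to the case $V>0$. The one substantive step is the upper containment $K\subseteq K_\infty$: it is the elementary ``frame'' analogue of John's theorem, and choosing $v_1,\ldots,v_n$ extremal for the determinant is precisely what pins $K$ inside the parallelepiped $K_\infty$; working with the cross-polytope/parallelepiped pair rather than with inscribed and circumscribed ellipsoids is what keeps the constants down to $2^n$ and $2^n/n!$. If the symmetry $S=-S$ were not available, the left-hand containment would instead be proved with $K_1$ replaced by the simplex $\mathrm{conv}\{0,v_1,\ldots,v_n\}$, at the cost of a factor $2^{-n}$ in the lower bound of \eqref{volume}; in the intended application $S$ is symmetric, so no such loss occurs.
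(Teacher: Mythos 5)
Your proposal takes essentially the same route as the paper's proof: maximize $|\det(v_1\wedge\cdots\wedge v_n)|$ over $n$-tuples from $S$ (reducing to the minimal spanning subspace and padding with zeros if the maximum vanishes identically), then derive $K\subset K_\infty$ from Cramer's rule and the extremality of $(v_1,\ldots,v_n)$, and finally compute the volumes of the parallelepiped $K_\infty$ and cross-polytope $K_1$ by linear change of variables. That part matches the paper exactly.

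You are also right---and more careful than the paper itself---about the inner containment. The paper disposes of $K_1\subset K$ with a single sentence (``$v_1,\ldots,v_n$ must belong to $K$ since they belong to $S$; therefore, by convexity of $K$, the set $K_1$ must be contained in $K$''), but convexity of $K$ together with $0,v_1,\ldots,v_n\in K$ yields only $\mathrm{conv}\{0,v_1,\ldots,v_n\}\subset K$, a simplex of volume $|\det(v_1\wedge\cdots\wedge v_n)|/n!$, not the cross-polytope $K_1$, which has volume $2^n|\det(v_1\wedge\cdots\wedge v_n)|/n!$. As you observe, $K_1\subset K$ genuinely needs $-v_i\in K$ for each $i$, i.e.\ requires $S$ (or at least $K$) to be symmetric about the origin, a hypothesis absent from the lemma as stated; for a non-symmetric example such as $S=\{0,e_1,\ldots,e_n\}$ the lower bound in \eqref{volume} is in fact false. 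Both of your proposed remedies---impose $S=-S$, or replace $K_1$ by the simplex and accept the extra factor $2^{-n}$---are correct. And your final remark is also accurate: every invocation of this lemma in the paper supplies a symmetric $S$ (a set of the form $S_{t_0,r}\cup(-S_{t_0,r})$ in the necessity argument, or the unit sphere of a norm in the two geometric-function-system lemmas), and in any case only the upper bound $|K|\le 2^n|\det(v_1\wedge\cdots\wedge v_n)|$ and the containment $K\subset K_\infty$, neither of which needs symmetry, are ever actually used downstream.
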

\begin{proof}
Let $V$ be the unique vector subspace of $\R^n$ of smallest dimension which contains $S$ (where uniqueness holds because the intersection of two subspaces containing $S$ would be a subspace of smaller dimension also containing $S$). Let $m$ denote the dimension of $V$, and let $\det_V$ be any nontrivial alternating $m$-linear form on $V$.
  Let $(v_1,\ldots,v_m) \in S^m$ be any $m$-tuple at which the maximum of the function
\[ (v_1,\ldots,v_m) \rightarrow |\det_V (v_1 \wedge \cdots \wedge v_m)| \]
is attained. Since $S$ is not contained in any subspace of smaller dimension, $|\det_V (v_1 \wedge \cdots \wedge v_m)| > 0$ unless $m=0$ (in which case $S = \{0\}$ and the lemma is trivial). Now by Cramer's rule, for any $v \in V$, 
\[ v = \sum_{i=1}^m (-1)^{i-1}\frac{\det_V ( v \wedge v_1 \wedge \cdots \wedge  \widehat{v_i} \wedge \cdots \wedge v_m)}{\det_V (v_1 \wedge \cdots \wedge v_m)} v_i, \]
where, in this case, the circumflex $\widehat{\cdot}$ indicates that a vector is to be omitted from the determinant. In the particular case when $v \in S$, the $m$-tuple $(v,v_1,\ldots,\widehat{v_i},\ldots,v_m)$ belongs to the set $S^m$ over which the supremum of $|\det_V|$ was taken; therefore each numerator has magnitude less than or equal to the denominator. Thus $S$ belongs to the parallelepiped 
\[ P := \set{v \in \R^n}{v = \sum_{i=1}^m c_i v_i \mbox{ for some } c_1,\ldots,c_m \in [-1,1]}. \]
Since $P$ is convex and contains $S$, it must contain $K$ as well.  To establish the lemma, we extend the sequence $v_1,\ldots,v_m$ to a sequence of length $n$ by fixing $v_j = 0$ for $j > m$. Trivially $P = K_\infty$ for this choice, so the containment $K \subset K_\infty$ must hold. For the remaining containment, observe that $v_1,\ldots,v_n$ must belong to $K$ since they belong to $S$. Therefore, by convexity of $K$, the set $K_1$ must be contained in $K$. The volume inequality \eqref{volume} follows from the elementary calculation of the volumes of $K_1$ and $K_\infty$.
\end{proof}

With Lemma \ref{convexlem} in place, we turn now to the proof of Parts \ref{ocfails} and \ref{ocub} of Theorem \ref{mainthm}.
Pick any point $p \in {\mathcal M}$ and fix any smooth coordinate system $(t_1,\ldots,t_d)$ near $p$ so that the immersion $f : \mathcal M \rightarrow \R^d$ may be regarded in these coordinates as a function from a $3 \delta$ neighborhood of the origin (chosen so that $t=0$ are the coordinates of $p$) into $\R^n$. By Taylor's formula, for all $t_0,t \in \R^d$ with $|t_0| \leq \delta, |t| \leq 2\delta$, 
\begin{equation} f(t)-f(t_0) = \sum_{0< |\alpha| \leq \ell} \frac{(t-t_0)^\alpha }{\alpha!} \partial_t^\alpha f(t_0) + \sum_{|\beta|=\ell+1} \frac{(t-t_0)^\beta}{\beta!} R^\beta_{t_0}(t),\label{ts} \end{equation}
for any finite $\ell$, where each remainder term $R^\beta_{t_0}(t)$ is continuous on $|t| \leq 2 \delta$ and equals $\partial_t^\beta f(t_0)$ when $t = t_0$. (For most of what follows, $t_0$ will be regarded as a fixed but otherwise arbitrary point with $|t_0| \leq \delta$.) For definiteness, let $\ell := \kappa_n$, i.e., $\ell$ equals the highest order of differentiation that appears in a column of the matrix whose determinant forms $\mathcal A$ (or equivalently, $\ell$ is the number of boxes in column $n$ of the diagram given in Figure \ref{secondfig}). This choice of $\ell$ implies that the dimension of the space of polynomials of degree $\ell$ with no constant term is at least equal to $n$. For any $r \in (0,\delta]$, let $S_{t_0,r}$ be the compact subset of $\R^n$ given by 
\[ S_{t_0,r} := \{ 0 \} \cup \bigcup_{|\alpha| \leq \ell} \left\{ \frac{r^{|\alpha|}}{\alpha!} \partial_t^\alpha f(t_0) \right\} \cup \bigcup_{|\beta| = \ell+1, |t| \leq 2\delta} \left\{ \frac{r^{\ell+1}}{\beta!} R^\beta_{t_0} (t) \right\} \]
and let $K_{t_0,r}$ be the convex hull of $S_{t_0,r} \cup (-S_{t_0,r})$.
Now each term in either sum on the right-hand side of \eqref{ts} belongs to $K_{t_0,r}$ whenever $|t| \leq 2 \delta$ and $|t-t_0| \leq r$. Because the total number of summands on the right-hand side is at most some constant $C$ depending only on $d$ and $n$, the difference vector $f(t) - f(t_0)$ must belong to the dilated set $C K_{t_0,r}$ whenever $|t| \leq 2 \delta$ and $|t-t_0| \leq r$. In particular, this implies that the translated set $C K_{t_0,r} + f(t_0)$ must contain the vector $f(t)$ whenever $|t| \leq 2\delta$ and $|t-t_0| \leq r \leq \delta$.

By virtue of \eqref{volume}, the Lebesgue measure of the set $C K_{t_0,r} + f(t_0)$ is  $O(r^Q)$ as $r \rightarrow 0^+$ since it is dominated by a constant depending on $d$ and $n$ times a determinant $|\det (v_1 \wedge \cdots \wedge v_n)|$ for some $v_1,\ldots,v_n \in S_{t_0,r} \cup (-S_{t_0,r})$ and since $Q$ is by definition the smallest integer which it is possible to express as a sum of degrees of distinct, nonconstant monomials in $d$ variables (thus $Q$ corresponds to the smallest possible factor of $r$ which will appear via scaling in such determinants). In fact, a slightly stronger result is also true: namely, that it is possible to quantify the implied constant in this $O(r^Q)$ estimate in terms of the affine curvature tensor $\mathcal A$ at $t_0$.  For any collection $\alpha_1,\ldots,\alpha_n$ of monomials such that $|\alpha_1| + \cdots + |\alpha_n| = Q$, it is possible to find indices $i_\lambda$ for each $\lambda \in \Lambda_{d,n}$ (these indices being obtained by ``expanding'' each $\alpha_i$ as a composition of first-order coordinate derivatives) so that
\[ |\det (\partial^{\alpha_1}_t f(t_0) \wedge \cdots \wedge \partial^{\alpha_n}_t f(t_0))| = | {\mathcal A}_{t_0} ((\partial_{t_{i_\lambda}})_{\lambda \in \Lambda_{d,n}})|. \]
Therefore it follows from \eqref{volume} that when $r \leq \delta$, the image $f(B_r(t_0))$ is contained in $C K_{t_0,r} + f(t_0)$, which is a compact convex set with volume no greater than
\[ C' r^Q \left[ \sum_{j_1,\ldots,j_Q=1}^d \left| {\mathcal A}_{t_0} (\partial_{t_{j_1}},\ldots,\partial_{t_{j_Q}}) \right|^2 \right]^{\frac{1}{2}} + O(r^{Q+1}) \]
as $r \rightarrow 0^+$, where $C'$ is some new constant depending only on $d$ and $n$.
Consequently, if $\mu$ is any measure on $\mathcal M$ satisfying the restricted Oberlin condition \eqref{oberlin} with exponent $\alpha$ and constant $C_\mu$, then
\begin{equation} \limsup_{r \rightarrow 0^+} r^{-\alpha Q} \mu(B_r(t_0)) \lesssim C_\mu \left| \left[ \sum_{j_1,\ldots,j_Q=1}^d \left| {\mathcal A}_{t_0} (\partial_{t_{j_1}},\ldots,\partial_{t_{j_Q}}) \right|^2 \right]^{\frac{1}{2}} \right|^{\alpha} \label{oberlineq} \end{equation}
for any $t_0$ with $|t_0| \leq \delta$ with an implied constant depending only on $d$ and $n$.
If $\alpha \geq d/Q$, this implies that $\mu$ must be absolutely continuous with respect to Lebesgue measure on $\mathcal M$ on a $\delta$-neighborhood of the chosen origin point $p$, and if $\alpha > d/Q$, it further implies that $\mu$ must be the zero measure on that neighborhood (since the Radon-Nykodym derivative of $\mu$ with respect to Lebesgue measure must vanish at every Lebesgue point, which is almost every point in the neighborhood), thus establishing Part \ref{ocfails} of Theorem \ref{mainthm}. When $\alpha = d/Q$, because $\mu$ must be absolutely continuous with respect to Lebesgue measure, it must follow that
\[ \limsup_{r \rightarrow 0^+} r^{-d} \mu(B_r(t_0)) = c_d \frac{d \mu}{dt} (t_0) \]
for almost every $t_0$ with $|t_0| \leq \delta$, where $d \mu / dt$ is the Radon-Nykodym derivative of $\mu$ with respect to Lebesgue measure $dt$ in the chosen coordinate system.
By \eqref{oberlineq}, then, 
\[ \left. \frac{d \mu}{dt} \right|_q \lesssim C_\mu \left[ \sum_{j_1,\ldots,j_Q=1}^d \left| {\mathcal A}_{q} (\partial_{t_{j_1}},\ldots,\partial_{t_{j_Q}}) \right|^2 \right]^{\frac{d}{2Q}} \]
for almost every point $q$ in some neighborhood of the original point $p$ (where, once again, the implied constant depends only on $d$ and $n$). Now, by transforming the coordinates $(t_1,\ldots,t_d)$ by matrices $M \in \mathrm{SL}(d,\R)$ to produce new coordinate systems, it follows by the same reasoning as above that 
\begin{align*}
\left. \frac{d \mu}{dt} \right|_q \lesssim C_\mu \left[ \sum_{j_1,\ldots,j_Q=1}^d \left| \sum_{i_1,\ldots,i_Q=1}^d {\mathcal A}_{q} (M_{j_1 i_1} \partial_{t_{i_1}},\ldots,M_{j_Q i_Q} \partial_{t_{i_Q}}) \right|^2 \right]^{\frac{d}{2Q}} 
  \end{align*}
for every $M \in \mathrm{SL}(d,\R)$ and almost every $q$ in a neighborhood of $p$ (by continuity of the right-hand side as a function of $M$, it suffices to consider only some countable dense subset of $\mathrm{SL}(d,\R)$ so that the set on which the inequality fails is clearly null). Taking an infimum over $M$ gives that
\[ \frac{d \mu}{dt} \lesssim C_\mu \frac{d \aff}{dt} \]
almost everywhere on the coordinate patch. Because the coordinates and patch were arbitrary, it follows that Part \ref{ocub} of Theorem \ref{mainthm} must hold with an implicit constant which equals a dimensional quantity (depending only on $d$ and $n$) times the  Oberlin constant $C_\mu$ from \eqref{oberlin} for the measure $\mu$ itself.

\section{Sufficiency and nontriviality}
\label{suffsec}
\subsection{On the geometry of functions on measurable sets}

This section begins with a construction generalizing the results of Theorem 1 of \cite{gressman2009}. Roughly stated, that theorem indicated that for single-variable real polynomials of a given degree, every measurable subset of the real line has a ``core'' which contains a nontrivial fraction of the set such that the supremum of any such polynomial (or appropriately weighted derivatives) on the core is bounded above by the average of the polynomial on the entire set. The proof involved careful analysis of Vandermonde determinants and has no immediate generalization to other dimensions or families of functions. In the arguments below, an entirely different approach will be used which is based on convex geometry and admits extensions to a variety of new contexts. In particular, the setting of polynomials is no simpler to study than any other finite-dimensional family of real analytic functions, which will be the preferred formulation of the result.


To formulate the result, it is convenient to make the following definition.
Let $\M$ be any real analytic manifold of dimension $d$ and let $\F$ be a finite-dimensional vector space of real analytic functions on $\M$ whose differentials span the cotangent space at every point of $\M$. Any such pair $(\M,\F)$ will be called a geometric function system. Such a system will be called compact when either $\M$ is compact or has a compact closure in some larger real analytic manifold $\M^+$ such that the functions of $\F$ extend to functions $\F^+$ on $\M^+$ in such a way that $(\M^+,\F^+)$ is also a geometric function system. The first result is a ``zeroth order'' version of the results of \cite{gressman2009}:
\begin{lemma}
Suppose $(\M,\F)$ is a compact geometric function system. Then for any finite positive measure $\mu$ on $\M$ absolutely continuous with respect to Lebesgue measure and any measurable set $E \subset \M$ of positive measure, there is a \label{noderivlem} measurable subset $E' \subset E$ such that $\mu(E') \gtrsim \mu(E)$ and
\[ \sup_{p \in E'} |f(p)| \lesssim \frac{1}{\mu(E)} \int_E |f| ~ d \mu \mbox{ for all } f \in \F. \]
The implicit constants in both inequalities depend only on the pair $(\M,\F)$.
\end{lemma}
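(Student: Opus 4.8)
The plan is to prove this as a consequence of a covering/rescaling argument that reduces the general statement to a local model in which $\F$ looks like a fixed finite-dimensional space of real analytic functions on a ball, and then to use the affine-invariant machinery of \eqref{affdef}--\eqref{norm} together with Lemma \ref{convexlem} to extract the core. First I would exploit compactness: cover the closure of $\M$ in $\M^+$ by finitely many coordinate charts, so that it suffices to prove the statement for a single chart $U \subset \R^d$ with $\mu$ comparable to Lebesgue measure there (the density of $\mu$ with respect to $dt$ is bounded above and below away from $0$ and $\infty$ on each chart, by the compactness hypothesis and the assumption that the differentials of $\F$ span the cotangent space, which prevents degeneration). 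This reduces the claim to: for a finite-dimensional space $\F$ of real analytic functions on a ball, any $E$ of positive measure contains $E'$ with $|E'| \gtrsim |E|$ and $\sup_{E'}|f| \lesssim |E|^{-1}\int_E |f|\, dt$ for all $f \in \F$, with constants depending only on $\F$ and the chart.

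The key step is the following reduction to a polynomial estimate. Write $\dim \F = m$ and pick a basis $g_1,\dots,g_m$ of $\F$; the map $q \mapsto (g_1(q),\dots,g_m(q))$ embeds $U$ (after shrinking) into $\R^m$ as a real analytic submanifold whose linear span of differentials is everything, so one may as well assume $\F$ is the space of affine-linear functionals restricted to this embedded image --- equivalently, $\sup_{E'}|f|$ controls the distance of $E'$ to a hyperplane, uniformly over hyperplanes. Thus I need: there is $E' \subset E$ with $|E'| \gtrsim |E|$ such that every affine functional is at most $C|E|^{-1}\int_E|\cdot|$ on $E'$. Now I would invoke the centroid/John-position type argument: let $c$ be the centroid of $E$ (with respect to $\mu$, which is comparable to Lebesgue), translate so $c=0$, and let $\Theta$ be the inertia ellipsoid of $E$. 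The point is that for any unit functional $u$, $\int_E |u(x)|\,d\mu(x) \gtrsim \mu(E)\,(u^T \Theta u)^{1/2}$ by Cauchy--Schwarz in one direction; and by a standard reverse inequality for convex-type bodies (or directly, since $E$ has positive Lebesgue density one can replace $E$ by a ``bulk'' portion), $\int_E|u(x)|\,d\mu \lesssim \mu(E)\,(u^T\Theta u)^{1/2}$ as well, after discarding a set of small measure. Then one takes $E'$ to be $E$ intersected with a fixed multiple of the inertia ellipsoid $\{ x : x^T \Theta^{-1} x \leq \rho^2\}$; Chebyshev's inequality guarantees $\mu(E \setminus E') \leq \rho^{-2}\,\mathrm{tr}(I)\,\mu(E) \cdot (\text{const})$, so choosing $\rho$ large but $\F$-independent keeps $\mu(E') \gtrsim \mu(E)$, while on $E'$ every unit functional $u$ satisfies $|u(x)| \leq \rho\,(u^T\Theta u)^{1/2} \lesssim |E|^{-1}\int_E|u|\,d\mu$.

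The subtlety, and what I expect to be the main obstacle, is that the reduction ``$\F$ = affine functionals on the embedded image'' is only literally true to first order; the embedding $q \mapsto (g_1(q),\dots,g_m(q))$ is curved, so $\sup_{E'}|f|$ for $f \in \F$ is not exactly the distance of the image of $E'$ to a hyperplane unless one is careful about higher-order terms. The fix is to work at a small but fixed scale determined only by $\F$: since $\M^+$ is compact and the $g_i$ real analytic, there is a scale $r_0 = r_0(\M,\F) > 0$ such that on every ball of radius $r_0$ the map is bi-Lipschitz onto its image with uniform constants, and the curvature contributes only a bounded multiplicative distortion; one decomposes the ambient chart into finitely many such balls, applies the ellipsoid argument on the ball of largest $\mu$-mass intersected with $E$, and absorbs the distortion into the implicit constants. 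A second, more serious point is that the lower bound $\int_E|u|\,d\mu \gtrsim \mu(E)(u^T\Theta u)^{1/2}$ can fail if $E$ is very spread out but thin in direction $u$ --- however this is exactly where the definition of $\Theta$ as the second-moment matrix of $E$ saves us, since $(u^T\Theta u)^{1/2}$ is by definition the $L^2(\mu|_E)$-norm of $u$ (up to $\mu(E)^{1/2}$), and the reverse Hölder inequality $\|u\|_{L^2(\mu|_E)} \lesssim \|u\|_{L^1(\mu|_E)}$ holds with a constant depending only on $\dim\F$ because $u$ restricted to the image of $E$ is (comparable to) a polynomial of bounded degree after the curved change of variables, and polynomials of bounded degree on any measurable set satisfy reverse Hölder with dimension-dependent constants. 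Packaging these three ingredients --- finite covering at scale $r_0$, the inertia-ellipsoid core, and reverse Hölder for bounded-degree objects --- yields the lemma with constants depending only on $(\M,\F)$.
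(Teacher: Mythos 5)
There is a genuine gap in your argument, and it lies in the inertia-ellipsoid step. You deduce that on $E' := E \cap \{x : x^T\Theta^{-1}x \le \rho^2\}$ every unit linear functional $u$ satisfies $|u(p)| \le \rho\, (u^T \Theta u)^{1/2}$, and then you need $(u^T\Theta u)^{1/2} \lesssim \frac{1}{\mu(E)}\int_E |u|\,d\mu$, i.e., the reverse H\"older inequality $\left(\frac{1}{\mu(E)}\int_E |u|^2\,d\mu\right)^{1/2} \lesssim \frac{1}{\mu(E)}\int_E |u|\,d\mu$. This inequality is \emph{false} for linear functionals with respect to a general finite measure $\mu$ absolutely continuous with respect to Lebesgue. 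A one-dimensional counterexample: on $E=[0,1]$ let $\mu$ have density $\varepsilon$ away from $t=1$ plus a tall narrow spike of total mass $\approx 1$ near $t=1$, and take $u(t)=t-1$. Then $\mu(E)\approx 1$, $\int_E|u|\,d\mu\approx \varepsilon/2$, but $\int_E|u|^2\,d\mu\approx \varepsilon/3$, so the ratio of the $L^2$- to the $L^1$-average blows up like $\varepsilon^{-1/2}$. The citation of ``reverse H\"older for polynomials of bounded degree on measurable sets'' does not help here: those Remez-type statements concern Lebesgue measure (or measures comparable to it), while the lemma has to hold uniformly over all absolutely continuous $\mu$, and your earlier reduction to ``$\mu$ comparable to Lebesgue on each chart'' is not justified --- $\mu$ is a free datum of the lemma, and nothing in the hypotheses (compactness of $(\M,\F)$, spanning of cotangent spaces by $d\F$) controls its density from below. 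In fact, in the counterexample above, the inertia ellipsoid of $E$ is of size $\sqrt{\varepsilon}$ whereas the required scale is $\varepsilon$, so the $L^2$-based ellipsoid core is simply too large.

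The paper avoids this entirely, and the difference is instructive: it never passes through an $L^2$ object. It defines the norm $\|f\| := \frac{1}{\mu(E)}\int_E|f|\,d\mu$ directly on $\F$ and applies Lemma~\ref{convexlem} to the unit ball of \emph{this} $L^1$-type norm, extracting a basis $f_1,\dots,f_k$ with $\|f_i\|=1$ and $f=\sum c_i f_i$, $|c_i|\le\|f\|$. Then $E'=\{p\in E:\sum|f_i(p)|\le 2k\}$ does the job by Chebyshev, because $\int_E\sum|f_i|\,d\mu = k\,\mu(E)$. The point is that Lemma~\ref{convexlem} plays the role of the ``good position'' lemma you wanted, but applied to the $L^1$-normed convex body of $\F$ rather than to the second-moment ellipsoid; that way there is no $L^2$-to-$L^1$ comparison to make, and no need for any Remez-type input, any comparability of $\mu$ to Lebesgue, or any discussion of curvature distortion. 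Your embedding-into-$\R^m$ reduction and the ``distance to a hyperplane'' reformulation are also unnecessary detours: $\F$ already is a finite-dimensional space of functions, and the whole argument lives there.
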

\begin{proof}
For each $f \in \F$, consider the following norm:
\[ ||f|| := \frac{1}{\mu(E)} \int_E |f| d \mu. \]
Compactness of the geometric function system implies that $||f||$ is finite for every $f \in {\mathcal F}$.
Because each function $f \in \F$ is real analytic and the measure of $E$ is strictly positive, no $f \in \F$ aside from the zero function can have $||f|| = 0$, which is what guarantees that $|| \cdot ||$ is a norm rather than merely a seminorm. Assuming that the dimension of $\mathcal F$ is $k$, applying Lemma \ref{convexlem} to the set $S$ which is the unit sphere of $|| \cdot ||$ and using homogeneity of the norm, there must be functions $f_1,\ldots,f_k$ with $||f_i||=1$ for all $i$ (none of the functions $f_i$ will be identically zero because the unit sphere does not lie in any nontrivial subspace of $\mathcal F$) such that every $f \in \F$ has the property that
\[ f = \sum_{i=1}^k c_i f_i \]
with $|c_i| \leq ||f||$ for each $i$.  In particular, this implies that
\[ |f(p)| = \left| \sum_{i=1}^k c_i f_i(p) \right| \leq ||f|| \sum_{i=1}^k |f_i(p)| \]
for each $f \in {\mathcal F}$.
Let $E'$ be the subset of $E$ on which $\sum_{i=1}^k |f_i(p)| \leq 2k$; by Tchebyshev's inequality,
 $$\mu(E') \geq \mu(E) - \frac{1}{2k} \int_E \sum_{i=1}^k |f_i(p)| d \mu \geq \frac{1}{2} \mu(E)$$
 and
 \[ \sup_{p \in E'} |f(p)| \leq \sup_{p \in E'} \left[ ||f|| \sum_{i=1}^k |f_i(p)| \right] \leq \frac{2k}{\mu(E)} \int_E |f| d \mu \]
 for all $f \in \F$.
\end{proof}

The extension of the results of \cite{gressman2009} to derivative estimates in higher dimensions is necessarily much more subtle than the one-dimensional case because of inherent issues of anisotropy of differentiation in differing directions. Any proper formulation will necessarily be phrased in terms of vector fields which capture (either implicitly or explicitly) this anisotropy. The formulation to be used here is as follows:
\begin{lemma}
Suppose $(\M,\F)$ is a compact geometric function system and let $N$ be any positive integer. Then for any  finite positive measure $\mu$ on $\M$ absolutely continuous with respect to Lebesgue measure and any measurable set $E \subset \M$ of positive measure, there is an open set $U \subset \M$, a family of smooth vector fields $\{X_{j,i}\}_{j,i}$ with $j \in \{1,\ldots,N\}$ and $i \in \{1,\ldots,d\}$ and a measurable set $E' \subset E \cap U$ such that the following are true with implicit constants depending only on the pair $(\M,\F)$ and the integer $N$: \label{derivlemm}
\begin{itemize}
\item The subset $E' \subset E \cap U$ satisfies $\mu(E') \gtrsim \mu(E)$.
\item The vector fields $X_{j,i}$ satisfy $\inf_{p \in E'} \at{ \mu( X_{j,1} \wedge \cdots \wedge X_{j,d})}_p \gtrsim \mu(E)$
and
\begin{equation} X_{j,i} = \sum_{i' = 1}^d c_{j,i,i'} X_{j-1,i'} \label{changeofbasis} \end{equation}
with $|c_{j,i,i'}| \lesssim 1$ for each $j \in \{2,\ldots,N\}$ and each $i,i' \in \{1,\ldots,d\}$. Here $\mu(X_1 \wedge \cdots \wedge X_d)$ equals the volume of the parallelepiped generated by $X_1,\ldots,X_d$ as measured by $\mu$, which more formally is defined to equal Radon-Nykodym derivative $d \mu / dt$ with respect to some coordinate system $(t_1,\ldots,t_d)$ times the absolute value of the determinant of the matrix ${\mathbf X}$ with entries ${\mathbf X}_{ij} = dt_i(X_j)$.
\item For any indices $i_1,\ldots,i_N \in \{1,\ldots,d\}$, 
\begin{equation} \sup_{ p \in E'} \left| X_{N,i_N} \cdots X_{1,i_1} f(p) \right| \lesssim \frac{1}{\mu(E)} \int_E |f| d \mu \label{derivest} \end{equation}
uniformly for all $f \in \F$.
\end{itemize}
\end{lemma}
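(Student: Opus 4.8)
The plan is to prove Lemma~\ref{derivlemm} by induction on $N$, using Lemma~\ref{noderivlem} as the base case and, at each step, applying Lemma~\ref{noderivlem} again to a cleverly chosen \emph{enlarged} geometric function system whose functions include not only $\F$ but also the relevant first derivatives of the functions built at the previous stage. The key structural idea is that the family $\{X_{j,i}\}$ should be produced one ``level'' $j$ at a time, and that passing from level $j-1$ to level $j$ amounts to differentiating once in $d$ well-chosen directions. The subtlety the introduction warns about (anisotropy) is handled by \emph{not} fixing the vector fields in advance: the vector fields $X_{j,i}$ at level $j$ will be chosen, via Lemma~\ref{convexlem} applied to a norm on the relevant function space, so that the $d$ functions $X_{j,1}f,\ldots$ (for a suitable generating set) together with the lower-level derivatives form a good ``near-orthonormal'' frame in the sense made precise by that lemma.

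The key steps, in order, are as follows. First I would set up the induction: for $N=1$, apply Lemma~\ref{noderivlem} to $\F$ to get $E_1'\subset E$ with $\mu(E_1')\gtrsim\mu(E)$ and $\sup_{E_1'}|f|\lesssim \mu(E)^{-1}\int_E|f|\,d\mu$; then choose coordinates $(t_1,\ldots,t_d)$ near a density point, declare $X_{1,i}:=\partial_{t_i}$ on a small ball $U$, and observe that, after a preliminary $\SL(d,\R)$ normalization of the coordinates (exactly as in the proof of Parts~\ref{ocfails}--\ref{ocub}), one can arrange $\mu(X_{1,1}\wedge\cdots\wedge X_{1,d})\gtrsim\mu(E)$ on $E_1'$; the derivative estimate \eqref{derivest} for $N=1$ is vacuous since level $1$ involves no derivatives of $f$ beyond $f$ itself (or, if one wants $X_{1,i_1}f$, includes $\partial_{t_i}f$ in the function system, which is legitimate since $(\M,\F)$ compact and the $\partial_{t_i}f$ are real analytic). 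Second, for the inductive step, suppose the vector fields $X_{j,i}$ and set $E_{N-1}'$ have been constructed for levels $j\le N-1$. Consider the finite-dimensional space $\F_{N}$ spanned by all functions of the form $X_{N-1,i_{N-1}}\cdots X_{1,i_1}f$ for $f\in\F$ and all index tuples, together with the coordinate functions needed to recognize volumes; apply Lemma~\ref{noderivlem} (on the open set $U$, with the restricted measure) to obtain a further subset and, crucially, a frame $f_1,\ldots,f_k$ with $\|f_\ell\|=1$ so that every $g\in\F_N$ satisfies $g=\sum c_\ell f_\ell$ with $|c_\ell|\lesssim\|g\|$. Third, define $X_{N,i}$ as the image of $\partial_{t_i}$ (in coordinates adapted to $E_{N-1}'$) under the change of frame dictated by \eqref{changeofbasis}, with the coefficients $c_{N,i,i'}$ chosen so that differentiating along $X_{N,i}$ lands in the span of the $f_\ell$ with controlled coefficients; verify that $|c_{N,i,i'}|\lesssim1$ because the previous-level frame is itself near-orthonormal, and that the parallelepiped volume bound $\mu(X_{N,1}\wedge\cdots\wedge X_{N,d})\gtrsim\mu(E)$ survives (the determinant of the $c_{N,i,i'}$ matrix is $\gtrsim1$ on the new good set, again by the convex-geometry lemma). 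Fourth, chain the estimates: $\sup_{E'}|X_{N,i_N}\cdots X_{1,i_1}f|$ is, by construction, $\lesssim$ a $\|\cdot\|$-norm of a function in $\F_N$, which by Lemma~\ref{noderivlem}'s conclusion is $\lesssim\mu(E)^{-1}\int_E|\cdots|\,d\mu$, and one unwinds the inductive hypothesis to reduce this to $\mu(E)^{-1}\int_E|f|\,d\mu$ at the cost of dimensional constants. Finally, intersect all the good sets $E_1'\supset E_2'\supset\cdots\supset E_N'=:E'$; since there are only boundedly many levels and each step only loses a constant fraction, $\mu(E')\gtrsim\mu(E)$.

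The main obstacle I anticipate is the \emph{simultaneous} control, at each level, of three competing requirements: (i) the new vector fields $X_{N,i}$ must be expressible in terms of the old ones with $O(1)$ coefficients, (ii) the $\mu$-volume of the parallelepiped they span must stay $\gtrsim\mu(E)$ (so one cannot let the frame degenerate), and (iii) the mixed derivatives $X_{N,i_N}\cdots X_{1,i_1}f$ must be controlled by the \emph{average} of $|f|$ over $E$, not merely by an average of derivatives of $f$. Requirement (iii) is what genuinely generalizes \cite{gressman2009}: one must feed into Lemma~\ref{noderivlem} at level $N$ a function system built from derivatives of the \emph{previous} level's functions, and then argue that the supremum bound produced there, when combined with the frame-decomposition, telescopes back down to an average of $|f|$ alone. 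The delicate point is that the directions of differentiation at level $N$ depend on the frame chosen at level $N$, which in turn depends on $E$; keeping all implied constants depending only on $(\M,\F)$ and $N$ — and not on $E$ — requires that each invocation of Lemma~\ref{convexlem}/Lemma~\ref{noderivlem} be applied to a function system that is fixed in advance (spanned by fixed derivatives of $\F$ in the fixed coordinate chart), with only the \emph{frame within that fixed space} varying with $E$. Making this bookkeeping precise — in particular, ensuring the coordinate chart and the open set $U$ can be chosen uniformly, shrinking only finitely often — is where the real work lies; the analytic inputs (Taylor expansion, Tchebyshev, convex hull volume comparison) are all already in hand.
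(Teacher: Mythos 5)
Your high-level skeleton matches the paper's: induction on $N$ with Lemma~\ref{noderivlem} as the base case, an invocation of Lemma~\ref{convexlem} at each stage to produce a ``near-orthonormal'' frame of functions, a Tchebyshev step to retain a positive fraction of $E$, and intersection of nested good sets. The telescoping of the derivative estimate is also the right idea and corresponds to the paper's chain through \eqref{derivest2}. However, two of the mechanisms you propose are not the paper's, and in both cases the substitute argument has a genuine hole.

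First, your construction of the vector fields does not work. You want to fix coordinates $(t_1,\ldots,t_d)$ on a small ball $U$ near a density point, set $X_{1,i}:=\partial_{t_i}$ there, and apply an $\SL(d,\R)$ normalization ``as in the proof of Parts~\ref{ocfails}--\ref{ocub}.'' But the constants must depend only on $(\M,\F)$ and $N$, and a small ball around one density point can carry an arbitrarily small fraction of $\mu|_E$, so $\mu(E'\cap U)\gtrsim\mu(E)$ is unavailable. (Also, the $\SL$-normalization in Parts~\ref{ocfails}--\ref{ocub} is a pointwise infimum defining $\aff$, not a single linear change of coordinates you can fix once for all of $E$.) The paper never picks a chart a priori: it takes the frame $f_1,\ldots,f_k$ of $\F_{N-1}$ produced by Lemma~\ref{convexlem} relative to the norm $\frac{1}{\mu(E\cap U_{N-1})}\int_{E\cap U_{N-1}}|\cdot|\,d\mu$, selects a $d$-subset $(f_{\beta_1},\ldots,f_{\beta_d})$ whose differential wedge is pointwise dominant (within a factor of $2$) on an open set $V_\beta$ carrying $\gtrsim\mu(E)$ of the mass, and then \emph{defines} $X_{N,i}$ explicitly by the Cramer's-rule formula $X_{N,i}f:=(df_{\beta_1}\wedge\cdots\wedge df\wedge\cdots\wedge df_{\beta_d})/(df_{\beta_1}\wedge\cdots\wedge df_{\beta_d})$. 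Your phrase ``define $X_{N,i}$ as the image of $\partial_{t_i}$ under the change of frame dictated by \eqref{changeofbasis}'' is circular, since \eqref{changeofbasis} is a conclusion about the $X_{j,i}$, not a definition of them.

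Second, and more seriously, your argument for the volume bound $\inf_{E'}\mu(X_{N,1}\wedge\cdots\wedge X_{N,d})\gtrsim\mu(E)$ is wrong. You assert that ``the determinant of the $c_{N,i,i'}$ matrix is $\gtrsim 1$ on the new good set, again by the convex-geometry lemma,'' but Lemma~\ref{convexlem} gives only \emph{upper} bounds on expansion coefficients; it gives no lower bound on the determinant of a change-of-basis matrix, which may indeed be arbitrarily small. The paper's actual argument here is the part you have left out entirely: it observes that $\mu(X_{N,1}\wedge\cdots\wedge X_{N,d})$ equals the Radon--Nikodym derivative of $\mu$ with respect to Lebesgue measure in the coordinates $(f_{\beta_1},\ldots,f_{\beta_d})$, so that
\[
\int_{E'}\bigl[\mu(X_{N,1}\wedge\cdots\wedge X_{N,d})\bigr]^{-1}d\mu=\int_{E'}|df_{\beta_1}\wedge\cdots\wedge df_{\beta_d}|,
\]
then controls the right-hand side by the change-of-variables formula with a uniform bound $M$ on the number of nondegenerate solutions of the system $f_{\beta_i}(p)=c_i$ --- this is exactly the content of Section~\ref{appendix}, which is a nontrivial real-analytic/B\'ezout-type finiteness statement --- together with the bound $|f_{\beta_i}|\lesssim 1$ on a further good subset from Lemma~\ref{noderivlem}, and concludes by Tchebyshev. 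You make no mention of any solution-count input, and without it there is no route to the lower bound on $\mu(X_{N,1}\wedge\cdots\wedge X_{N,d})$. This is not a bookkeeping issue: it is the structurally necessary new idea in this lemma, and your proposal does not contain a replacement for it.
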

\begin{proof}
By induction (the base case of which is taken to be Lemma \ref{noderivlem}), for a given measurable set $E \subset \M$ of positive measure, we may assume that there exist a nested family of open sets $\M =: U_0 \supset U_1 \supset U_2 \supset \cdots \supset U_{N-1}$ and vector fields $\{X_{j,i}\}$, $i=1,\ldots,d$, defined on $U_j$ for each $j \in \{1,\ldots,N-1\}$ satisfying all the stated properties. Next, let $\F_0 := \F$ and then take $\F_j$ to be the vector space of real analytic functions on $U_j$ spanned by $\F$ and all functions of the form $X_{j,i} f$ for $i = 1,\ldots,d$ and $f \in \F$. By construction, $\mu(E \cap U_{N-1}) \gtrsim \mu(E) > 0$, so in particular,
\[ ||f||_{N-1} :=  \frac{1}{\mu(E \cap U_{N-1})} \int_{E \cap U_{N-1}} |f| d \mu \]
will be a norm on $\F_{N-1}$. Let $f_1,\ldots,f_k$ be a basis of $\F_{N-1}$ given by applying Lemma \ref{convexlem} to the unit sphere of $||\cdot ||_{N-1}$, let $\mathcal I$ be the set of $d$-tuples $\beta := (\beta_1,\ldots,\beta_d)$ of indices satisfying $1 \leq \beta_1 < \beta_2 < \cdots < \beta_d \leq k$, and let $V_\beta$ be the open set
\[ \set{ p \in U_{N-1}}{ \left| \at{ d f_{\beta_1} \wedge \cdots \wedge df_{\beta_d}}_p \right| > \frac{1}{2} \left| \at{ d f_{\beta_1'} \wedge \cdots \wedge df_{\beta_d}'}_p \right| \ \forall \beta' \in {\mathcal I} \setminus \{\beta\}}. \]
Because the cardinality of $\mathcal I$ is bounded by a constant depending only on $N$ and the dimensions of $\F$ and $\M$, there is at least one $\beta$ such that $\mu(E \cap V_\beta) \gtrsim \mu(E \cap U_{N-1})$, where the implicit constant may simply be taken to be $(\# \Lambda)^{-1}$.  If we now define the vector field $X_{N,i}$ on the set $U_N := V_\beta$ to equal
\[ X_{N,i} f := \frac{ d f_{\beta_1} \wedge \cdots \wedge df_{\beta_{i-1}} \wedge df \wedge df_{\beta_{i+1}} \wedge \cdots \wedge df_{\beta_d}}{df_{\beta_1} \wedge \cdots \wedge df_{\beta_d}}, \]
then it must be the case that
\[ X_{N,i} f(p) = \sum_{i=1}^k c_i \frac{ \left. d f_{\beta_1} \wedge \cdots \wedge df_{\beta_{i-1}} \wedge df_i \wedge df_{\beta_{i+1}} \wedge \cdots \wedge df_{\beta_d} \right|_p }{ \left. df_{\beta_1} \wedge \cdots \wedge df_{\beta_d} \right|_p } \]
for constants $c_i$ satisfying $|c_i| \leq ||f||_{N-1}$. Since the ratio is bounded above by $2$ on $U_N$, it follows that
\begin{equation} \sup_{p \in U_N} |X_{N,i} f(p)| \leq \frac{2k}{\mu(E \cap U_{N-1})} \int_{E \cap U_{N-1}} |f| d \mu \label{derivest2} \end{equation}
for all $f \in \F_{N-1}$. Since $\mu(E \cap U_{N-1}) \gtrsim \mu(E)$, it follows by induction that
\[ \sup_{p \in U_N} |X_{N,i_N} \cdots X_{1,i_1} f(p)| \lesssim \frac{1}{\mu(E)} \int_{E} |f| d \mu \]
for all $f \in \F$. This establishes \eqref{derivest} for any set $E' \subset E \cap U_N$.

Next observe that each vector field $X_{N,i}$ is locally a coordinate vector field relative to the coordinate functions $(f_{\beta_1},\ldots,f_{\beta_d}) \in \F_{N-1}^d$ such that the average value of $|f_{\beta_i}|$ on $E \cap U_{N-1}$ is $1$. By induction, we may assume the corresponding fact is true for the vector fields $X_{N-1,i}$. In particular, if $(g_{\alpha_1},\ldots,g_{\alpha_n})$ are the coordinate functions for the vector fields $X_{N-1,i}$, then it follows that
\[ X_{N,i} = \sum_{i'=1}^d (X_{N,i} g_{\alpha_{i'}}) X_{N-1,i'}. \]
By the derivative estimate \eqref{derivest2}, assuming $N \geq 2$,
\begin{align*} 
\sup_{p \in U_N}  |X_{N,i} g_{\alpha_{i'}}(p)| & \leq \frac{2 \dim \F_{N-1}}{\mu(E \cap U_{N-1})} \int_{U_{N-1} \cap E} |g_{\alpha_{i'}}| d \mu \\
& \lesssim \frac{1}{\mu(E \cap U_{N-2})} \int_{E \cap U_{N-2}} |g_{\alpha_{i'}}| d \mu \lesssim 1,
\end{align*}
which is exactly the bound on the coefficients $c_{j,i,i'}$ claimed for \eqref{changeofbasis}.

Lastly, the quantity $\mu(X_{j,1} \wedge \cdots \wedge X_{j,d})$ must be estimated. Observe that $\mu(X_{j,1} \wedge \cdots \wedge X_{j,d})$ is exactly the Radon-Nykodym derivative of $\mu$ with respect to Lebesgue measure in coordinates given by $f_{\beta_1},\ldots,f_{\beta_d}$. This implies that
meaning that
\[ \int_{E'} \left[ \mu( X_{N,1} \wedge \cdots \wedge X_{N,d}) \right]^{-1} d \mu = \int_{E'} | d f_{\beta_1} \wedge \cdots \wedge df_{\beta_d}|. \]
Because the functions are real analytic, we know that there is a finite number $M$ independent of the choice of the functions $f_{\beta_i}$ such that the system $f_{\beta_i}(p) = c_i$ has at most $M$ nondegenerate solutions (at which the Jacobian is nonzero). For polynomial functions, this is a simple consequence of B\'{e}zout's Theorem. In our case, however, even if the original function system ${\mathcal F}_0$ consists only of polynomial functions, the definition of the $X_{j,i}$ lead naturally to the inclusion of certain rational functions in ${\mathcal F}_i$, at which point there is not much additional difficulty in going to the more general context of real analytic functions. The algebraic argument is given in Section \ref{appendix} and for now may be safely postponed.

Assuming the existence of such an $M$ depending only on the geometric function system, by the change of variables formula,
\[ \int_{E'} |d f_{\beta_1} \wedge \cdots \wedge df_{\beta_d}| \leq M \prod_{j=1}^d | f_{\beta_i}(E')|, \]
where $|f_{\beta_i}(E')|$ refers to the one-dimensional Lebesgue measure of the image of $E'$ via $f_{\beta_i}$.
Because the average value of $f_{\beta_i}$ on $E \cap U_{N-1}$ is $1$, by Lemma \ref{noderivlem}, there is a subset $E' \subset E \cap U_{N}$ with $\mu(E') \gtrsim \mu(E \cap U_{N}) \gtrsim \mu(E)$ such that
\[ \sup_{p \in E'} |f_{\beta_i}(p)| \lesssim \frac{1}{\mu(E \cap U_N)} \int_{E \cap U_N} |f_{\beta_i}| d \mu \lesssim \frac{1}{\mu(E \cap U_{N-1})} \int_{E \cap U_{N-1}} |f_{\beta_i}| d \mu \lesssim 1 \]
for each $i$, which implies that $|f_{\beta_i}(E')| \lesssim 1$ for each $i$ as well.
For this set $E'$, it follows that
\[ \int_{E'} \left[ \mu( X_{N,1} \wedge \cdots \wedge X_{N,d}) \right]^{-1} d \mu \lesssim 1. \]
Further restricting $E'$ using Tchebyshev's inequality, we may assume that 
\[ \inf_{E'} \mu(X_{N,1} \wedge \cdots \wedge X_{N,n}) \gtrsim \mu(E). \]
This completes the proof.
\end{proof}

\subsection{Proof of Part \ref{ocineq} of Theorem \ref{mainthm}}

We now return to the proof of Part \ref{ocineq} of Theorem \ref{mainthm}. The proof combines Lemma \ref{derivlemm} with the geometric framework introduced in  Section \ref{construction}. Suppose that $\mathcal M$ is a real analytic manifold of dimension $d$ and that $f$ is a real analytic immersion of ${\mathcal M}$ into $\R^n$ in such a way that the component functions $f_1,\ldots,f_n$ of the immersion together with the constant function belong to some compact geometric function system $(\M,\F)$. Fix any compact convex set $K \in \mathcal K_n$, let $E := f^{-1}(K)$, and let $p_0 \in E$. Now the integral
\[ I(E) := \frac{1}{\aff(E)^n} \int_{E^n} \! \! \! |\det( f(p_1) - f(p_0),\ldots,f(p_n) - f(p_0))| d \aff(p_1) \cdots d \aff(p_n) \]
must be bounded above by $n! |K|$ since the integrand equals $n!$ times the volume of the simplex generated by $f(p_0),\ldots,f(p_n)$, which has volume bounded by $|K|$ since each point belongs to $K$ and $K$ is convex.
In this case Lemma \ref{derivlemm} can be applied to each integral iteratively to prove a lower bound for the functional. Specifically, the lemma is applied to the innermost integral, which is then replaced by a supremum over some set $E'$ of some derivative in the parameter $p_1$. As a result, the lemma establishes that
\begin{align*}
 I(E) \gtrsim \sup_{(p_1,\ldots,p_n) \in (E')^n} | \det (& X_{1,i_{(1,1)}} f(p_1) \wedge  \cdots \wedge  \\
 & X_{\kappa_j,i_{(j,1)}} \cdots X_{1,i_{(j,\kappa_j)}} f(p_j) \wedge \cdots \wedge \\
 & X_{\kappa_n,i_{(n,1)}} \cdots X_{1,i_{(n,\kappa_n)}} f(p_n)) |
 \end{align*}
for any choice of indices $i_{\lambda}$ for $\lambda \in \Lambda_{d,n}$. Next replace the supremum over $(p_1,\ldots,p_n) \in E'^n$ by a supremum over $p \in E'$ assuming $p_1 = \cdots = p_n = p$.  It is also advantageous to use only vector fields $X_{\kappa_n,i'}$ rather than using any $X_{j,i}$ for $j < \kappa_n$. Thanks to \eqref{changeofbasis} it must be the case that
\begin{align*}
\det & \left(X_{\kappa_n,i_{(1,1)}'} f(p) \wedge \cdots \wedge X_{\kappa_n,i_{(n,1)}} \cdots X_{\kappa_n,i_{(n,\kappa_n)}'} f(p) \right) \\
 &  = \sum_{i} c_{ii'} \det \left(X_{1,i_{(1,1)}} f(p) \wedge \cdots \wedge X_{\kappa_n,i_{(n,1)}} \cdots X_{1,i_{(n,\kappa_n)}} f(p) \right) 
 \end{align*}
 with coefficients $|c_{ii'}| \lesssim 1$ where the sum is over all possible choices of the indices $i_\lambda$. This identity holds because the change of basis formula may be simply substituted term-by-term in the left-hand side of the equation; any terms in which the coefficients of the change of basis happened to be differentiated by some subsequent vector field would ultimately have determinant zero since (assuming the column in which the derivative appears is column $j$), the number of derivatives acting directly on $f$ would be strictly less than $\kappa_j$, which means that column $j$ and all preceding columns would be linearly dependent. Therefore by the triangle inequality, it must be the case that
\begin{align*}
 I(E) \gtrsim \sup_{p \in E'} | \det (& X_{\kappa_n,i_{(1,1)}'} f(p) \wedge  \cdots \wedge   
  X_{\kappa_n,i_{(n,1)}'} \cdots X_{\kappa_n,i_{(n,\kappa_n)}} f(p)) |
 \end{align*}
 uniformly for any choice of $i'_\lambda$. Taking an $\ell^2$ norm over all such choices and invoking the definition \eqref{affdef} of the density $\aff$
 \begin{align*}
 I(E) \gtrsim \sup_{p \in E'} \left[ \left. \aff(X_{\kappa_n,1},\ldots,X_{\kappa_n,d}) \right|_p \right]^\frac{Q}{d}.  
 \end{align*}
 To conclude, observe that for the measure $\aff$, the quantity $\aff(X_{\kappa_n,1},\ldots,X_{\kappa_n,d})$ exactly equals the geometric density $\aff(X_{\kappa_n,n} \wedge \cdots \wedge  X_{\kappa_n,n})$ bounded below by Lemma \ref{derivlemm}. Therefore
\[ |K| \gtrsim I(E)| \gtrsim \left[ \left| \aff(X_{\kappa_n,1},\ldots,X_{\kappa_n,n} ) \right|_p \right]^\frac{Q}{d} \gtrsim (\aff(E))^{\frac{Q}{d}} \]
uniformly in $K$ and $E$. This is exactly Part \ref{ocineq} of Theorem \ref{mainthm}.

\subsection{Proof of Part \ref{nontriv} of Theorem \ref{mainthm}}

The final piece of Theorem \ref{mainthm} is to show that $\alpha = d/Q$ is a nontrivial exponent in the sense that there is always some submanifold $\mathcal M$ of dimension $d$ in $\R^n$ for which the Oberlin condition \eqref{oberlin} is satisfied with exponent $\alpha$ for some nonzero measure on $\mathcal M$. In fact, it suffices to consider the case when $\mathcal M$ is essentially $\R^d$ and the immersion $f$ is a polynomial embedding. In principle, one needs only to show that $\aff$ is nonzero in some such case (since the arguments of the previous section apply to show that \eqref{oberlin} holds locally on $\mathcal M$, and then a scaling argument establishes the same result globally). In light of the estimate \eqref{norm} for the value of $\aff$, the existence of submanifolds with nonzero affine measure is exactly equivalent to the existence of submanifolds for which the affine curvature tensor does not belong to the nullcone of the space of $Q$-linear covariant tensors. The nullcone is difficult if not impossible to describe explicitly, and determining whether a tensor of the very special form \eqref{affdef} belongs to it or not turns out to be a significant challenge. The key observation is that it so happens that critical points (as a function of $M$) in the infimum definition \eqref{affdef} of $\aff$ must be points at which the infimum is attained.  This will be the main observation to be exploited; a secondary observation, encapsulated in the following lemma, allows one to simplify the structure of the affine curvature tensor $\mathcal A_p$ at the expense of infimizing over a larger group:
\begin{lemma}
Let $A$ be a real $n \times m$ matrix where $m \geq n$, and let $[A]_{i_1 \cdots i_n}$ be the $n \times n$ matrix formed by combining columns $i_1,\ldots,i_n$ into a square matrix, i.e., the $(j,k)$ entry of this matrix is $A_{ji_k}$. Then
\begin{equation} \sum_{i_1,\ldots,i_n=1}^m |\det [A]_{i_1 \cdots i_n}|^2 = \frac{n!}{n^n} \left[ \inf_{M \in \SL(n,\R)} \sum_{j=1}^n \sum_{i=1}^m \left| \sum_{k=1}^n M_{jk} A_{ki} \right|^2 \right]^n. \label{detnorm} \end{equation}
\end{lemma}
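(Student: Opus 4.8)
The plan is to compute both sides of \eqref{detnorm} in closed form and compare. Write $B := AA^T$, a symmetric positive semidefinite $n\times n$ matrix, and observe that the inner double sum on the right-hand side is the squared Frobenius norm $\|MA\|_F^2=\operatorname{tr}(MBM^T)$. For the left-hand side, note that if two of the indices $i_1,\dots,i_n$ coincide then the matrix $[A]_{i_1\cdots i_n}$ has a repeated column, so its determinant vanishes; hence the sum over all ordered $n$-tuples equals $n!$ times the sum over strictly increasing tuples $i_1<\cdots<i_n$, which by the Cauchy--Binet formula is exactly $\det(AA^T)$. Thus the left-hand side equals $n!\,\det B$, and it remains to show the right-hand side equals the same quantity.

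If $\operatorname{rank}A<n$ then $\det B=0$, and the infimum on the right is also zero: choosing a nonzero $w$ with $w^TA=0$, build a matrix with first row proportional to $w^T$ and normalize it to lie in $\SL(n,\R)$, then rescale exactly as in the scaling argument following \eqref{affdef} (first row by $t^{n-1}$, the others by $t^{-1}$) to get $M^{(t)}\in\SL(n,\R)$ for which $M^{(t)}A$ has zero first row and remaining rows $O(t^{-1})$, whence $\|M^{(t)}A\|_F\to 0$. So both sides vanish. Now suppose $\operatorname{rank}A=n$, so $B$ is positive definite; diagonalize $B=O\Lambda O^T$ with $O$ orthogonal and $\Lambda=\operatorname{diag}(\lambda_1,\dots,\lambda_n)$, $\lambda_k>0$. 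Since negating a row of a matrix $N$ alters neither $\operatorname{tr}(N\Lambda N^T)=\sum_{j,k}N_{jk}^2\lambda_k$ nor the constraint $|\det N|=1$, writing $N=MO$ shows $\inf_{M\in\SL(n,\R)}\operatorname{tr}(MBM^T)=\inf_{|\det N|=1}\sum_k c_k\lambda_k$, where $c_k:=\sum_j N_{jk}^2$ is the squared length of the $k$-th column of $N$. Hadamard's inequality gives $1=|\det N|^2\le\prod_k c_k$, and conversely any positive reals $c_k$ with $\prod_k c_k=1$ are realized by the diagonal matrix $N=\operatorname{diag}(\sqrt{c_1},\dots,\sqrt{c_n})$; hence the infimum equals $\min\{\sum_k c_k\lambda_k: c_k>0,\ \prod_k c_k=1\}$, which by AM--GM is $n(\prod_k\lambda_k)^{1/n}=n(\det B)^{1/n}$, attained at $c_k=(\det B)^{1/n}/\lambda_k$. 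Substituting, $\frac{n!}{n^n}[n(\det B)^{1/n}]^n=n!\,\det B$, matching the left-hand side.

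The only step with real content is the nondegenerate minimization, and its crux is recognizing that $\operatorname{tr}(MBM^T)$ depends on $M$ solely through the column lengths of $MO$ and that Hadamard's inequality pins these down tightly enough for AM--GM to be sharp; the orthogonal diagonalization and the bookkeeping of $\SL(n,\R)$ versus $\{|\det N|=1\}$ are routine, handled by the row-sign-flip observation.
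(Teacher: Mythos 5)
Your proof is correct but follows a route that differs meaningfully from the paper's. The paper observes that both sides of \eqref{detnorm} are invariant under the left action of $O(n,\R)$ and the right action of $O(m,\R)$ (the latter justified via an orthonormal tensor-basis argument), applies the singular value decomposition to put $A$ in rectangular diagonal form, at which point both sides are explicit in the singular values, and finishes with AM--GM plus a specific diagonal $M$ that equalizes the diagonal entries. You instead evaluate the left side in closed form via Cauchy--Binet as $n!\,\det(AA^T)$, diagonalize only $B = AA^T = O\Lambda O^T$, substitute $N = MO$ (using a row sign flip to absorb the possibility $\det O = -1$), and observe that $\mathrm{tr}(N\Lambda N^T)$ depends on $N$ only through its squared column norms $c_k$; Hadamard's inequality then constrains $\prod_k c_k \geq 1$, diagonal matrices show every $c$ with $\prod_k c_k = 1$ is achievable, and AM--GM on $\sum_k c_k\lambda_k$ subject to $\prod_k c_k \geq 1$ gives the value $n(\det B)^{1/n}$. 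The trade is roughly that the paper's reduction buys an entirely computation-free optimization step at the cost of the two-sided orthogonal invariance claim, whereas yours makes the $O(m)$-invariance transparent via Cauchy--Binet but must invoke Hadamard's inequality to control the remaining freedom in $M$. The rank-deficient case is handled by essentially the same one-parameter scaling in $\SL(n,\R)$ in both proofs. One small presentational point: when you assert the infimum over $\{|\det N|=1\}$ equals the minimum over $\{\prod_k c_k = 1\}$, it is cleanest to note that AM--GM already gives the lower bound $n(\det B)^{1/n}$ on all of $\{\prod_k c_k \geq 1\}$, so the Hadamard constraint and the achievability of $\{\prod_k c_k=1\}$ combine to pin down the infimum.
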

\begin{proof}
First observe that both
\[ A \mapsto \sum_{i_1,\ldots,i_n = 1}^m |\det [A]_{i_1 \cdots i_n} |^2 \mbox{ and } A \mapsto \sum_{j=1}^n \sum_{i=1}^m |A_{ji}|^2 \]
are invariant under the action of $O(n,\R)$ on the columns of $A$ as well as the action of $O(m,\R)$ on the rows of $A$ (the former assertion is relatively simple; the latter case rests on the observation that tensor products of elements of an orthonormal basis generate an orthonormal basis on the space of tensors in a natural way). In particular, this means that we may, by the singular value decomposition, assume without loss of generality that 
\[ A_{ji} = \sigma_j \delta_{ji} \]
where $\sigma_j$ is the $j$-th singular value of $A$. Thus
\[ \sum_{i_1,\ldots,i_n=1}^m |\det [A]_{i_1 \cdots i_n}|^2 = n! \sigma_1^2 \cdots \sigma_n^2 \mbox{ and }  \sum_{j=1}^n \sum_{i=1}^m |A_{ji}|^2 = \sigma_1^2 + \cdots + \sigma_n^2. \]
By the AM-GM inequality,
\begin{equation} \frac{1}{n!} \sum_{i_1,\ldots,i_n=1}^m |\det [A]_{i_1 \cdots i_n}|^2 \leq \left[ \frac{1}{n} \sum_{j=1}^n \sum_{i=1}^m |A_{ji}|^2 \right]^n \label{amgm1} \end{equation}
with equality if and only if the singular values of $A$ are all equal. Now multiplication of $A$ on the left by a matrix $M \in \SL(n,\R)$ preserves the left-hand side but not necessarily the right-hand side; taking an infimum of the right-hand side over all $M$ gives that
\[ \sum_{i_1=1,\ldots,i_n=1}^m |\det [A]_{i_1 \cdots i_n}|^2 \leq \frac{n!}{n^n} \left[ \inf_{M \in \SL(n,\R)} \sum_{j=1}^n \sum_{i=1}^m \left| \sum_{k=1}^n M_{jk} A_{ki} \right|^2 \right]^n. \]
To show equality, assume without loss of generality that $A$ is diagonal in the standard basis of $\R^{n \times m}$ and let $M$ be the diagonal matrix such that $M_{ii} := \sigma_i^{-1} (\sigma_1 \cdots \sigma_n)^{1/n}$ assuming none of the singular values are zero. In this case, $MA$ has all diagonal entries equal, and consequently \eqref{amgm1} holds with equality when $A$ is replaced by $MA$, giving equality in \eqref{detnorm} as well. If, on the other hand, some singular value $\sigma_{i'}$ of $A$ is zero, let $M^{(t)}$ be another diagonal matrix such that $M_{ii}^{(t)} = t$ for all entries $i \neq i'$ and let $M_{i' i'}^{(t)} = t^{-n+1}$. Then for $t > 0$, $M^{(t)} \in SL(n,\R)$ and
\[ \lim_{t \rightarrow 0^+} \left[ \sum_{j=1}^n \sum_{i=1}^m \left| \sum_{k=1}^n M_{jk}^{(t)} A_{ki} \right|^2 \right]^m = \lim_{t \rightarrow 0^+} \left[ \sum_{i \neq i'} t^2 \sigma_i^2 \right]^n = 0 \]
so \eqref{detnorm} holds with equality again in this case as well.
\end{proof}

In showing that for any pair $(d,n)$ with $1 \leq d < n$, there is a $d$-dimensional submanifold of $\R^n$ for which the corresponding measure $\aff$ is not trivial, it is clear from the definition of $\mathcal A$ and the pigeonhole principle that nontriviality of $\aff$ requires that the vectors $\{X^\alpha f(p)\}_{1 \leq |\alpha| < \kappa_n }$ be linearly independent for any system of coordinate vectors $X_1,\ldots,X_n$. Knowing {\it a priori} that this must be the case, it is possible to essentially factor $\mathcal A$ in such a way that only the highest-order behavior of $f$ at $p$ is relevant for purposes of calculation.
To that end, fix any $p$ and let $V_p$ be the $n_V$-dimensional subspace of $\R^n$ spanned by the vectors $X^{\alpha} f(p)$ as $\alpha$ ranges over all multiindices $\alpha$ with $1 \leq |\alpha| < \kappa_{n}$ (where $\kappa_n$ is the highest order of differentiation that one finds in any column of $\mathcal A$), and let $W_p$ be any $n_W$-dimensional subspace of $\R^n$ chosen so that
\[ V_p \cap W_p = \{0\} \mbox{ and } V_p + W_p = \R^n. \]
It is then possible to uniquely and smoothly write $f$ as a sum $f = f_V + f_W$ such that $f_V$ takes values in $V_p$ and $f_W$ takes values in $W_p$. By definition of $V_p$, it must also be the case that, modulo a constant vector, $f_W$ vanishes to order $\kappa_n$ at the point $p \in \M$. Next fix determinant functionals on $V_p$ and $W_p$ compatible with the determinant on $\R^n$, meaning that
\[ \det (v_1 \wedge \cdots \wedge v_{n_V} \wedge w_1 \wedge \cdots \wedge w_{n_W}) = \det_V(v_1 \wedge \cdots \wedge v_{n_V}) \det_W(w_1 \wedge \cdots \wedge w_{n_W}) \]
when $\{v_1,\ldots,v_{n_V}\}$ and $\{w_1,\ldots,w_{n_W}\}$ are bases of $V$ and $W$, respectively. By the multilinearity of the determinant on $\R^n$, the tensor $\mathcal A_p$ factors at $p$ into pieces that depend on $f_V$ and $f_W$ separately, namely
\begin{align*} {\mathcal A}_p & ( (X_\lambda)_{\lambda \in \Lambda_{d,n}} ) = \det_V  \left(X_{(1,1)} f_V(p) \wedge \cdots \wedge X_{(j_*,1)} \cdots X_{(j_*,\kappa_{n}-1)} f_V(p) \right) \\ & \times \det_W\left(X_{(j_*+1,1)} \cdots X_{(j_*+1,\kappa_{n})} f_W(p) \wedge \cdots \wedge  X_{(n,1)} \cdots X_{(n-1,\kappa_{n})} f_W(p)\right).  
\end{align*}
where $j_*$ is the largest index for which $\kappa_{j_*} < \kappa_{n} $. Splitting the index set $\Lambda_{d,n}$ into subsets $\Lambda_V$ and $\Lambda_W$ for those indices which appear in the first and second terms of this factorization, respectively, it follows that the terms in the factorization are themselves tensors (which up to a normalization constant, are defined intrinsically and smoothly in a neighborhood of the chosen point $p$) which will be called  ${\mathcal A}_{V_p} ((X_\lambda)_{\lambda \in \Lambda_V})$ and  ${\mathcal A}_{W_p} ((X_\lambda)_{\lambda \in \Lambda_W})$,
respectively, so that
\begin{equation} {\mathcal A}_p( (X_\lambda)_{\lambda \in \Lambda} ) = {\mathcal A}_{V_p} ((X_\lambda)_{\lambda \in \Lambda_V}) {\mathcal A}_{W_p} ((X_\lambda)_{\lambda \in \Lambda_W}). \label{factorize} \end{equation}

The fundamental consequence of the factorization \eqref{factorize} is that it allows one to fully separate the contributions of the ``lower order'' parts $\mathcal A_{V_p}$ and the ``higher order'' parts $\mathcal A_{W_p}$. In the former case, it turns out that $\mathcal A_{V_p}$ expressed in coordinates with respect to the basis $X_1,\ldots,X_d$ is actually invariant under the representation $\rho_\cdot^X$ defined by \eqref{reptn}. This is because the vector space of differential operators generated by $X^{\alpha}$ for $1 \leq |\alpha| < \kappa_n$ is invariant under the action of $\rho_M^X$, so there must be a matrix $[\rho_M^X]$ which acts on the column space spanned by 
$$X_{(1,1)} f(p),\ldots,X_{(j_*,1)} \cdots X_{(j_*,\kappa_{j_*})} f(p) $$
which is equal to the action of $\rho_M^X$ on this space.
For every $M \in \SL(d,\R)$, the matrix $[\rho_M^X]$ must have determinant of magnitude $1$. This follows by symmetry when $M$ is a diagonal matrix (since $\mathcal A_p((X_\lambda)_{\lambda \in \Lambda_V}$ will necessarily vanish by the pigeonhole principle unless each vector field $X_j$ occurs an equal number of times, i.e., unless the number of $\lambda \in \Lambda_V$ for which $X_\lambda = X_j$ is a constant function of $j$). Likewise $|\det [\rho^X_M] | = 1$ when $M$ is an orthogonal matrix since continuity of the map $M \mapsto |\det [\rho^X_M] |$ together with the identity $|\det [\rho_{MN}^X]| = |\det [\rho_{M}^X]| \cdot |\det [\rho_N^X]|$ shows that if the maximum or minimum values of $|\det [\rho_M^X]|$ as a function on the orthogonal group were different from $1$, they could not be attained (since one could always use the group law to find a new othogonal matrix with strictly greater or smaller absolute determinant). However, any $M \in \SL(d,\R)$ can always be factored as a product of a diagonal and orthogonal matrix, so $|\det [\rho_M^X]| = 1$ must hold in all cases.

Since $\mathcal A_{V_p}$ is invariant under $\rho_M^X$, one needs merely to show that there is some lower-order part $f_V$ for which it is not identically zero. In this case,
\[ f_V(t_1,\ldots,t_d) := (t^\alpha)_{1 \leq |\alpha| < \kappa_n} \]
(where we interpret the coordinates on the right-hand side as being relative to some choice of basis of $V$)
suffices, since with respect to the standard coordinate vectors $\partial_{t_i}$ the matrix of ${\mathcal A}_{V_p}$ is seen to be lower triangular with nonzero diagonal entries.

Thus the problem is now fully reduced to the study of $\mathcal A_{W_p}$. In this case, we will set
\[ f_W(t_1,\ldots,t_d) := (p_1(t),\ldots,p_m(t)) \]
for some polynomials $p_1,\ldots,p_m$ which are homogeneous of degree $\kappa_n$, where $m$ is less than or equal to the dimension of the vector space of all such homogeneous polynomials.

By work of
Richardson and Slodowy \cite{rs1990} (which is the real analogue of ideas introduced by Kempf and Ness \cite{kn1979}) it suffices to show that there is a choice of $p_1,\ldots,p_m$ such that the map
\[ M \mapsto || \rho_M \mathcal A_{W_p} ||^2 \]
has a critical point (where from here forward, $\rho$ and the norm $||\cdot||$ will be taken with respect to the standard coordinates $\partial_{t_1},\ldots,\partial_{t_d}$) since they showed that all critical points are points where the infimum over all $M \in \SL(d,\R)$ is actually attained. Moreover, it suffices to show that such a critical point exists when $M$ is the identity. By \eqref{detnorm}, the problem can be further reduced to showing that the function
\[ (N,M) \mapsto \sum_{k=1}^m \sum_{j_1,\ldots,j_{\kappa_n} = 1}^d \left| \sum_{\ell=1}^m \sum_{i_1,\ldots,i_{\kappa_n}=1}^d N_{\ell k} M_{i_1 j_1} \cdots M_{i_{\kappa_n} j_{\kappa_n}} \partial_{t_{i_1}} \cdots \partial_{t_{i_{\kappa_n}}} p_\ell(t) \right|^2 \]
has a critical point at the identity as a function of $(N,M) \in \SL(m,\R) \times \SL(d,\R)$ for appropriate choice of $p_1,\ldots,p_m$. Differentiating in $N$ at the identity along some $E \in \mathfrak{sl}(m,\R)$ gives that
\[ 2 \sum_{k=1}^m \sum_{\ell=1}^m E_{\ell k} \sum_{j_1,\ldots,j_{\kappa_n}=1}^d  \partial_{t_{j_1}} \cdots \partial_{t_{j_{\kappa_n}}} p_\ell(t) \partial_{t_{j_1}} \cdots \partial_{t_{j_{\kappa_n}}} p_{k}(t) = 0\]
for all traceless $m \times m$ matrices $E$. A similar calculation differentiating $M$ ultimately gives that critical points are those which satisfy the system
\begin{align}
 \sum_{i_1,\ldots,i_{\kappa_n}} \partial_{t_{i_1}} \cdots \partial_{t_{i_{\kappa_n}}} p_\ell(t) \partial_{t_{i_1}} \cdots \partial_{t_{i_{\kappa_n}}} p_{\ell'}(t) & = \lambda_1 \delta_{\ell,\ell'}, \label{firstp} \\
 \sum_{\ell, i_2,\ldots,i_{\kappa_n}} \partial_{t_{j}} \cdots \partial_{t_{i_{\kappa_n}}} p_\ell(t) \partial_{t_{j'}} \cdots \partial_{t_{i_{\kappa_n}}} p_\ell(t) & = \lambda_2 \delta_{j,j'} \label{secondp}
 \end{align}
 for some real numbers $\lambda_1,\lambda_2$ and all indices $j,j',\ell,\ell'$. At any such critical point, ${\mathcal A}_{W_p}$ will be nonzero exactly when the constants $\lambda_1$ and $\lambda_2$ are nonzero. To simplify matters somewhat, observe that for any real homogeneous polynomial $p(t) = \sum_{|\alpha| = k} c_\alpha t^\alpha$ of degree $k$,
 \[ ||p||_{k}^2 := \sum_{i_1,\ldots,i_k} |\partial_{t_{i_1}} \cdots \partial_{t_{i_k}} p(t)|^2 = \sum_{|\alpha| = k} k! \alpha! |c_\alpha|^2 \]
 since $\partial^\beta_t t^\alpha = \alpha! \delta_{\alpha,\beta}$ and for any multiindex $\beta$, there are $k! / \beta!$ ways to write $\partial^\beta_t$ as an iterated derivative $\partial_{t_{i_1}} \cdots \partial_{t_{i_k}}$. By polarization, the norm $||\cdot||_k$ has an immediate corresponding inner product. In this notation, \eqref{firstp} and \eqref{secondp} become
 \begin{equation} \sum_{i=1}^d \ang{\partial_{t_i} p_\ell, \partial_{t_i} p_{\ell'}}_{\kappa_{n}-1} = \lambda_1 \delta_{\ell,\ell'} \mbox{ and } \sum_{\ell=1}^m \ang{\partial_{t_i} p_\ell, \partial_{t_{i'}} p_\ell}_{\kappa_{n}-1} = \lambda_2 \delta_{i,i'}. \label{short} \end{equation}
It is similarly elementary to compute inner products of monomials:
\begin{equation} \ang{\partial_{t_i} t^\alpha, \partial_{t_{i'}} t^\beta}_{\kappa_n - 1} = \alpha_i \beta_{i'} (\kappa_n-1)! (\alpha-e_i)! \delta_{\alpha-e_i,\beta-e_{i'}} \label{ipcalc} \end{equation}
where $e_i$ is the multiindex which is zero except in position $i$, where it equals $1$ (and note that the right-hand side of \eqref{ipcalc} is to be interpreted as zero if $\alpha_i = 0$ or $\beta_{i'} = 0$).

For simplicity, fix $\kappa := \kappa_n$.
To build a nontrivial $f_W$, we will chose each polynomial $p_1,\ldots,p_m$ to have one of two types. The first type is of the form
\[ p_\ell(t) := \frac{t^{\alpha}}{\sqrt{\alpha!}} \]
for some multiindex with $|\alpha| = \kappa$ which is not a pure $\kappa$ power (i.e., $t^\alpha \neq t^\kappa_i$ for any $i$). We impose a compatibility condition that if $p_j(t) = t^{\alpha} / \sqrt{\alpha!}$ for some $j$, then for every cyclic permutation $\alpha'$ of $\alpha$, there is another index $j'$ such that $p_{j'}(t) = t^{\alpha'}/\sqrt{\alpha'!}$. Assuming that most $p_j$ are of this form, we additionally allow for up to $d$ more polynomials which depend only on the pure $\kappa$ power monomials $t_i^\kappa$ as follows.  Suppose that $\{\varphi_j\}_{j=1,\ldots,d}$ is a uniform, normalized tight frame (UNTF) on $\R^{d_0}$ for some $d_0 \leq d$, which means that
\[ \sum_{j=1}^{d} |\ang{v,\varphi_j}|^2 = ||v||^2 \mbox{ for all } v \in \R^{d_0} \mbox{ and } ||\varphi_j||^2 = \frac{d_0}{d}, \ j=1,\ldots,d. \]
Such collections of vectors are guaranteed to exist for any $d_0 \leq d$ (see \cite{gkk2001} for existence; a general algorithm based on Theorem 7 of \cite{kadison2002} which can convert a NTF to a UNTF is also known \cite{hp2004}). With such a UNTF, one may optionally chose to add exactly $d_0$ polynomials to the collection constituting $f_W$ provided these new polynomials have the form
\[ \sum_{j=1}^d \frac{t_j^\kappa}{\sqrt{\kappa!}} \varphi_{j,k} \mbox{ for } k=1,\ldots,d_0, \]
where $\varphi_{j,k}$ is the $k$-th coordinate of $\varphi_j$ in the standard basis.
(Note that these optional UNTF-generated polynomials can be added for at most a single choice of UNTF.)

To verify the first condition of \eqref{short}, notice that when $\ell \neq \ell'$ and one of $\ell$ or $\ell'$ correspond to indices of a monomial-type polynomial, every inner product in the sum must be zero because $\partial_{t_i} p_\ell$ and $\partial_{t_{i}} p_{\ell'}$ have no monomials in common and are consequently orthogonal.  If $\ell = \ell'$ and the polynomial $p_\ell$ is monomial type, then
\[ \sum_{i=1}^d \ang{\partial_{t_i} \frac{t^\alpha}{\sqrt{\alpha!}}, \partial_{t_{i}} \frac{t^\alpha}{\sqrt{\alpha!}}}_{\kappa-1} = \sum_{i=1}^d \frac{(\kappa-1)! \alpha_i^2 (\alpha-e_i)!}{\alpha!} \delta_{\alpha_i > 0} = \kappa!.\]
 If, in the final case, both $l$ and $l'$ arise from UNTF terms, the left-hand side of the first equality of \eqref{short} must equal
 \[ \sum_{i=1}^d \frac{1}{\kappa!} \varphi_{i,\ell} \varphi_{i,\ell'} \ang{ \partial_{t_i} t_i^\kappa, \partial_{t_i} t_i^\kappa}_{\kappa-1} = \sum_{i=1}^d \frac{1}{\kappa!} \varphi_{i,\ell} \varphi_{i,\ell'} (\kappa!)^2 = \kappa! \delta_{\ell,\ell'} \]
 since the $\varphi_j$ are a normalized tight frame.
 
 As for the second condition of \eqref{short}, by \eqref{ipcalc}, the polynomials $p_\ell$ of monomial type have norms that equal
 \[ \ang{\partial_{t_i} \frac{t^\alpha}{\sqrt{\alpha!}}, \partial_{t_{i'}} \frac{t^\alpha}{\sqrt{\alpha!}}}_{\kappa-1} = (\kappa-1)! \alpha! \delta_{i,i'} \delta_{\alpha_i > 0}. \]
 Summing over all monomial-type polynomials gives a matrix (as a function of $i$ and $i'$) which is a multiple of the identity: simply by symmetry, any monomial appearing in the sum also appears with all its cyclic permutations, so all diagonal entries must be equal.  As for the terms of the sum which arise from UNTF polynomials, 
 \[ \ang{ \partial_{t_i} \sum_{j=1}^d \frac{t_j^\kappa}{\sqrt{\kappa!}} \varphi_{j,k}, \partial_{t_{i'}} \sum_{j=1}^d \frac{t_j^\kappa}{\sqrt{\kappa!}} \varphi_{j,k}}_{\kappa-1} = \frac{|\varphi_{i,k}|^2}{\kappa!} \delta_{i,i'} ||t_i^\kappa||_\kappa^2 =  \kappa! \delta_{i,i'} |\varphi_{i,k}|^2, \]
 which again sums to a multiple of the identity since, after the sum, the $i$-th diagonal entry equals $||\varphi_i||^2$.
 
By the results of Richardson and Slodowy \cite{rs1990}, it is possible to find a nondegenerate highest-order part $f_W$ of the embedding $f$ provided that the dimension $m$ of this highest order part corresponds to the cardinality of a collection of polynomials of the type considered above: monomial-type polynomials for a set of monomials excluding pure powers and invariant under cyclic permutations together with $d_0$ UNTF-type polynomials for any $d_0 \in \{0,\ldots,d\}$.
 To see that any integer $m$ between $1$ and the total number of monomials of degree $\kappa$ (inclusive) can admit such a collection, observe that the possible cardinalities of just the collection of monomial-type polynomials range---with gaps, of course---from $0$ up to the total number of monomials minus $d$.  
 The size of any gap (i.e., consecutive values of $m$ which are not cardinalities of an admissible set of monomials) must be strictly less than $d$ for the simple reason that no equivalence class of monomials modulo cyclic permutation has cardinality greater than $d$.  In other words, if any non-pure power polynomials happen not already to belong to the collection, including any such monomial together with its cyclic permutations (which is a total of $d$ or fewer new monomials) will again make a larger admissible set. Since the gaps are size strictly less than $d$ and since $d_0$ can be chosen as desired in $\{d,\ldots,d\}$ combining both types of polynomials leads to a nondegenerate measure $\aff$ for any possible value of $m$ given the dimension $d$.

\section{Appendix: Uniform bounds on the number of solutions of real analytic systems of equations}
\label{appendix}

We finish with a brief discussion of the problem of uniformly bounding the number of nondegenerate solutions to any system of equations that arises in a geometric function system. The precise statement that is needed is that for arbitrary positive integers $d$ and $n$ (no longer retaining their previous definitions)
when $f_1,\ldots,f_d$ are real analytic functions on a neighborhood of the unit cube $[0,1]^n$, then any system of equations $(\Phi_1(x),\ldots,\Phi_n(x)) = (y_1,\ldots,y_n)$ must have bounded nondegenerate multiplicity when the functions $\Phi_i$ are rational functions of the $f_i$ and finitely many derivatives of each $f_i$. In other words, the number of solutions in $[0,1]^n$ at which the Jacobian is nonzero is bounded above by a constant that depends only on the functions $f_i$ and the complexity of the system,  in this case meaning the degrees of the numerators and denominators and the order of the highest derivative of an $f_i$. To see this, let $S$ be the Cartesian product of $\{1,\ldots,d\}$ with the set of multiindices $\alpha := (\alpha_1,\ldots,\alpha_n)$ such that $|\alpha| := \alpha_1 + \cdots + \alpha_n \leq N$.  For any $\beta$ which is a multiindex on $S$ (i.e., a map from $S$ into nonnegative integers), we define $s^\beta := 
\prod_{(j,\alpha) \in S} (s_{j,\alpha})^{\beta_{j,\alpha}}$ for every $s \in \R^S$ in analogy with the usual notation.
Lastly, define $P$ be the Cartesian product of $\{1,\ldots,n\}$ and multiindices $\beta$ of size at most $N$ on the set $S$. We can then define a mapping $F$ from $[0,1]^n \times \R^n \times \R^{P} \times \R^{P} \times \R^{S}$ into $\R^n \times \R^n \times \R^P \times \R^P \times \R^S$ by means of the formula
\begin{align*}
 F(&x,y,p,q,s)  := \\ & \left(
 \left( \sum_{(1,\beta) \in P} (p_{1,\beta} - y_1 q_{1,\beta}) s^\beta, \ldots, \sum_{(n,\beta) \in P} (p_{n,\beta} - y_n q_{n,\beta}) s^\beta \right) \right. 
 ,y,p,q,\\ & \qquad  \left. \vphantom{ \sum_{(1,\beta) \in P}} \{ s_{j,\alpha} - \partial^\alpha f_j(x)\}_{(j,\alpha) \in S} \right).
 \end{align*}
For a given triple $(y_0,p_0,q_0) \in \R^n \times \R^P \times \R^P$ and any positive scalar $C$, nondegenerate solutions of the system
\begin{equation} \frac{\sum_{|\beta| \leq N} (p_0)_{j,\beta} \prod_{(j',\alpha) \in S} (\partial^\alpha f_{j'}(x))^{\beta_{j',\alpha}}}{\sum_{|\beta| \leq N} (q_0)_{j,\beta} \prod_{(j',\alpha) \in S} (\partial^\alpha f_{j'}(x))^{\beta_{j',\alpha}}} = (y_0)_j, \qquad j = 1,\ldots,n, \label{rationalsys} \end{equation}
will also be nondegenerate solutions of the system
\[ F(x,y,p,q,s) = \left( 0, \frac{1}{C} y_0, \frac{1}{C^2} p_0, \frac{1}{C} q_0, 0 \right). \]
Choosing $C$ so that the right-hand always belongs to a fixed neighborhood of the origin with compact closure, we may use the fact that $F$ is itself real analytic in all parameters and so the number of connected components of the fiber $F^{-1}(0,y_0/C,p_0/C^2,q_0/C,0)$ is bounded uniformly in $y_0, p_0,$ and $q_0$ (which holds, in fact, for any analytic-geometric category in the sense of van den Dries and Miller \cite{vddm1996}), which gives exactly the desired property that there is also a uniform bound on the number of isolated solutions of \eqref{rationalsys}. If the functions $f_j$ are all polynomial, B\'{e}zout's Theorem gives a similar global bound on the number of nondegenerate solutions, i.e.,  for all nondegenerate solutions $x \in \R^n$ rather than simply $[0,1]^n$.

\bibliography{mybib}

\end{document}